\newcommand{\eps}{\varepsilon}
\newcommand{\e}{\varepsilon}
\renewcommand{\phi}{\varphi}
\newcommand{\J}{{\mathcal J}}
\newcommand{\C}{\mathbb C}
\newcommand{\R}{\mathbb R}
\newcommand{\N}{\mathbb N}
\newcommand{\E}{\mathbb E}
\renewcommand{\P}{\mathbb P}
\newcommand{\M}{\mathcal{M}}
\newtheorem{theorem}{Theorem}
\newtheorem{lemma}[theorem]{Lemma}
\newtheorem{corollary}[theorem]{Corollary}
\newtheorem{prop}[theorem]{Proposition}
\newcommand{\heap}[2]  {\genfrac{}{}{0pt}{}{#1}{#2}}
\newcommand{\sfrac}[2] {\mbox{$\frac{#1}{#2}$}}
\newcounter{remnr}
{\nopagebreak {\hfill{$\diamond$}}\\ }
\renewcommand{\phi}{\varphi}
\renewcommand{\P}{\mathbb{P}}
\renewcommand{\E}{\mathbb{E}}
\begin{document}


\title[Galton-Watson trees with vanishing martingale limit]
{\Large Galton-Watson trees\\ with vanishing martingale limit}

\author[Nathana\"el Berestycki, Nina Gantert, Peter M\"orters, Nadia Sidorova]
{Nathana\"el Berestycki, Nina Gantert, Peter M\"orters, Nadia Sidorova}

\maketitle

\vspace{0.5cm}

\begin{quote}{\small {\bf Abstract: }
We show that an infinite Galton-Watson tree, conditioned on its martingale limit being smaller than $\eps$, agrees
up to generation $K$ with a regular $\mu$-ary tree, where $\mu$ is the essential minimum of the offspring
distribution and the random variable $K$ is strongly concentrated near an explicit deterministic function
growing like a multiple of $\log(1/\eps)$.  More precisely, 
we show that if $\mu\ge2$ 
then with high probability as $\eps \downarrow 0$,  $K$
takes exactly one or two values. This shows in particular that
the conditioned trees converge 
to the regular $\mu$-ary tree, providing an example of entropic repulsion where the limit has vanishing entropy.}
\end{quote}

\vspace{0.5cm}

{\bf Mathematics Subject Classification (2010):} 60J80 (Primary) 60F10, 60K37.

{\bf Keywords:} Conditioning principle, large deviations, micro-canonical distribution, concentration of measure, sharp thresholds,
branching, entropic repulsion.
\vspace{2cm}

\section{Introduction}

The problem of \emph{conditioning principles} can be formulated in the following way: Given that a system comprising a large number
of individual components  shows highly unlikely \emph{collective} behaviour, describe the conditional law of an \emph{individual} component.
This situation arises frequently in statistical mechanics, when an ensemble of particles is subject to some constraint (for example a fixed energy
per particle). The distribution of the individual feature given the constraint is then referred to as the \emph{micro-canonical} distribution of the system.
The most famous result in this respect is the \emph{Gibbs conditioning principle}, which loosely speaking says that
under the condition that the empirical measure
$$L_n=\frac1n\sum_{i=1}^n \delta_{X_i}$$
of a family of independent random variables $X_1, ,\ldots, X_n$ with law $P$ belongs to some convex set~$A$,
the law of $X_1$ converges to the probability measure~$Q$ that minimizes the relative entropy $H( Q \,| P)$
subject to the constraint $Q\in A$. 
There exist several refinements of this result
describing rigorously the precise asymptotic strategy by which the random variables realize the large
deviation event $\{L_n\in A\}$. See the book of Dembo and Zeitouni~\cite{dzbook} for more on the classical Gibbs conditioning principle,
\cite{C, DZ, SZ} for refinements, and \cite{DSZ, MV, MN} for further examples of conditioning principles.
\medskip
\pagebreak[3]

The present paper describes such a conditioning principle in the case of Galton--Watson trees with a nondegenerate
offspring variable~$N$ satisfying $P(N=0)=0$ and $EN \log N<\infty$. Let $a:=EN> 1$ be the mean offspring number.
We denote by $(Z_n \colon n=0,1,\ldots)$ the sequence of generation sizes of the Galton Watson tree and note
that by definition $Z_0=1$. By the Kesten-Stigum theorem the \emph{martingale limit} 
$$W:=\lim_{n\to\infty} \frac{Z_n}{a^n}$$
is well-defined and strictly positive almost surely.  Note that $W$
can be seen as a random constant factor in front of a deterministic exponential  growth term~$a^n$, which together determine
the leading order asymptotics of the generation size~$Z_n$. In the framework of the preceding paragraph
the quantity $W$ represents the collective behaviour of the branching
individuals and we are interested in the offspring distribution of individual particles given that $W$ is smaller than
a small parameter $\eps$.
\medskip

An important observation is that the offspring distribution of the conditioned tree is not uniform over all generations
and the influence of the initial generations far outweighs that of later generations. Indeed, we show
that there is a sharp threshold level  $\gamma(\eps)$, satisfying
$$\gamma(\eps) \sim \frac{\log(1/\eps)}{\log(a/\mu)},$$
such that all individuals up to generation $\lceil \gamma(\eps)\rceil-2$  only produce the minimal
number $\mu := \min\{ n\in\N \colon P(N=n)>0 \}$ of offspring. Here $\sim$ denotes that the ratio of the
left and right hand side converges to one. Decomposing the population according to their ancestry in generation~$k$ gives
$$W= \frac1{a^k} \sum_{j=1}^{Z_k} W_j,$$
where $W_1, W_2, \ldots$ are independent copies of~$W$. Using this decomposition for $k=\lceil \gamma(\eps)\rceil-1$ and
assuming that the tree performs unconditionally from generation $k$ onwards shows that $W\sim (\mu/a)^k$ and hence
$\log W \sim \log \eps$, showing that minimal branching up to generation $\lceil \gamma(\eps)\rceil-2$ almost single-handedly
delivers the collective requirement.
\medskip

Our main results confirm and substantially refine this rough picture in the case where the minimal offspring number
satisfies $\mu>1$. In this case we can describe $\gamma(\eps)$ precisely as
\begin{align*}
\gamma(\e):= \frac{\log(1/\e)}{\log(a/\mu)} -\frac{\log\log(1/\e)}{\log\mu}+H(\e),
\end{align*}
where $H$ is a multiplicatively periodic continuous function with period $a/\mu$.
The first branching producing more than the minimal number of offspring occurs in
generation $\lceil \gamma(\eps)\rceil-1$ or $\lceil \gamma(\eps)\rceil$.
We show that for most values of $\eps$ it
occurs in generation $\lceil \gamma(\eps)\rceil-1$ and, defining the random variable
$$K:=\min\big\{k\in\N \colon Z_k>\mu^k\big\},$$
we find that the size of generation~$K$ is asymptotically still given by $\mu^{K}$ with a relatively small additive
$\eps$-dependent correction. Before describing these results in more detail in Section~2, we now briefly explain
the situation in the `degenerate' case $\mu=1$, in which nonexponential growth of the tree is possible. The
concentration effect of the random variable $K$ which holds in the case $\mu>1$ is much less pronounced in the
case~$\mu=1$, but the result can be obtained by soft arguments, whereas the case of general $\mu>1$ requires much more subtle reasoning.
\medskip

\pagebreak[4]

\section{Statement of the main results}

We start by describing our results in the case $\mu=1$, for which the analysis is fairly straightforward. 
In this case we define
$$\gamma(\eps) := \frac{\log(1/\eps)}{\log a}.$$ Our results in this case are summarised by the following proposition.

\begin{prop}\label{Prop:mu=1}
There exists $\lambda>0$ such that
\begin{equation}\label{mu=1result}
\limsup_{\eps\downarrow 0} \P\big(|K-\gamma(\eps)| \geq x \big| W<\eps \big) \leq e^{-\lambda x}\quad
\mbox{ for all $x\geq 1$.}
\end{equation}
\end{prop}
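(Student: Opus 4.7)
The plan is to reduce everything to a sharp two-sided estimate for the left tail of the martingale limit. Writing $p_1:=\P(N=1)\in(0,1)$ and $\alpha:=\log(1/p_1)/\log a>0$, so that $\eps^\alpha=p_1^{\gamma(\eps)}$, I would first establish
\[
c\,\eps^\alpha\;\le\;\P(W<\eps)\;\le\;C\,\eps^\alpha\qquad\text{for all small }\eps>0,
\]
with $0<c\le C<\infty$. The lower bound follows from $\P(W<\eps)\ge p_1\,\P(W<a\eps)$ (condition on the root having a single descendant; the subtree is an independent copy, so $W=W_1/a$) iterated $\lceil\gamma(\eps)\rceil$ times. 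The matching upper bound is more delicate: starting from
\[
\P(W<\eps)\;=\;\sum_{n\ge 1}p_n\,\P(W_1+\cdots+W_n<a\eps)
\]
with i.i.d.\ copies $W_j$ of $W$, the nonnegativity bound $\P(W_1+\cdots+W_n<t)\le\P(W<t)^n$ makes the $n\ge 2$ contributions subleading, so $\P(W<\eps)\le p_1\,\P(W<a\eps)(1+o(1))$ and the claimed order follows on iteration.

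Granted this estimate, the \emph{upper tail} of $K$ is immediate: since $N\ge 1$ a.s.\ the event $\{K\ge k\}$ coincides with $\{Z_{k-1}=1\}$, which has probability exactly $p_1^{k-1}$, hence
\[
\P\bigl(K\ge\gamma(\eps)+x\,\big|\,W<\eps\bigr)\;\le\;\frac{p_1^{\gamma(\eps)+x-1}}{c\,\eps^\alpha}\;=\;\frac{p_1^{x-1}}{c},
\]
which is exponentially small at rate $\log(1/p_1)$. For the \emph{lower tail} I use the branching decomposition $W=a^{-k}\sum_{j=1}^{Z_k}W_j$, which gives
\[
\P(W<\eps,\,K=k)\;=\;p_1^{k-1}\sum_{n\ge 2}p_n\,\P(W_1+\cdots+W_n<\eps a^k).
\]
For $k<\gamma(\eps)$ one has $\eps a^k<1$, so combining $\P(W_1+\cdots+W_n<t)\le\P(W<t)^n$ with the sharp upper bound makes the $n=2$ term dominate; using $a^{2\alpha}=p_1^{-2}$ yields $\P(W<\eps,\,K=k)\le C_0\,\eps^{2\alpha}\,p_1^{-k-1}$. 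Dividing by $\P(W<\eps)\ge c\eps^\alpha$ and summing the resulting geometric expression over $k\le\gamma(\eps)-x$ produces a bound of order $p_1^{x-1}$, again exponential at rate $\log(1/p_1)$; taking $\lambda\in(0,\log(1/p_1))$ small enough to absorb the prefactors yields the statement.

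The \emph{main obstacle} will be the sharp upper bound $\P(W<\eps)\le C\eps^\alpha$. A naive recursion using the union bound $\P(W_1+\cdots+W_n<t)\le n\,\P(W<t)$ gives only $\P(W<\eps)\le a\,\P(W<a\eps)$, which iterates to the useless estimate $\P(W<\eps)\lesssim 1/\eps$. The key improvement is that $\sum_{j=1}^n W_j<t$ forces \emph{every} $W_j<t$, so the nonnegativity bound is quadratically smaller than the union bound and makes the $n\ge 2$ terms of the recursion genuinely subleading. A fixed-point iteration then pins the solution to the stated order $C\eps^\alpha$; once this is in place, the rest of the proof is essentially mechanical.
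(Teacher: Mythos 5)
Your proof follows essentially the same route as the paper's: decompose at the first non-minimal branching generation, invoke the two-sided estimate $c\,\eps^\alpha\le\P(W<\eps)\le C\,\eps^\alpha$ (your $\alpha$ is the paper's $\tau=-\log p_1/\log a$), control the lower tail of $K$ via the nonnegativity bound $\P(W_1+\cdots+W_n<t)\le\P(W<t)^n$ applied with $n\ge 2$, and the upper tail via $\P(K\ge k)=p_1^{k-1}$. The only substantive difference is that the paper simply cites Dubuc or M\"orters--Ortgiese for the two-sided left-tail estimate of $W$, whereas you sketch a derivation by iterating the renewal inequality $\P(W<\eps)\le p_1\P(W<a\eps)+(1-p_1)\P(W<a\eps)^2$. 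Be aware that the phrase ``follows on iteration'' hides a real step: the per-step multiplicative error $1+O(\P(W<a^j\eps))$ is compounded over roughly $\gamma(\eps)\to\infty$ iterations, so a naive pass gives a suboptimal exponent; one fixes this by a two-stage bootstrap, first bounding each error factor by a constant to get a crude $\P(W<\eps)\le C'\eps^{\alpha'}$ with $\alpha'\in(0,\alpha)$, then feeding that back to show $\sum_j\P(W<a^j\eps)$ is geometrically summable so the product of error factors stays bounded, recovering the sharp exponent $\alpha$.
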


In other words the time of the first branching producing more than the minimal number of offspring occurs in
generation $\gamma(\eps)$ with a tight random correction of order one. 

\medskip

Because $K\uparrow\infty$ this implies that the Galton-Watson tree conditioned
on $W<\eps$ converges (in a sense detailed below) for $\eps\downarrow 0$ to the regular $\mu$-ary tree. This fact, which also
holds in the case~$\mu>1$, is quite remarkable when seen in a large deviations context.
We shall explain this further in the next section, after the first main result is established.

\medskip 
We now come to the main result of this paper, which deals with the case $\mu\ge 2$. More precisely, we consider a Galton--Watson tree with offspring probabilities $p_n=P(N=n)$ and keep the notation established
above. We assume that $\mu=\min \{ n\in\N \colon p_n>0 \} \ge 2$ and also exclude the trivial case $p_{\mu}=1$.
Recall that $K-1$ is the first generation where an individual has more than the minimal number of offspring.

\begin{theorem}
\label{t_main}
We have
\begin{align*}
\lim_{\eps\downarrow0} \P\big(\, K = \lceil\gamma(\e)\rceil \ \text{\rm  or } K= \lceil\gamma(\e)\rceil +1 \, \big| \,  \ W<\e\big) = 1,
\end{align*}
where
\begin{align*}
\gamma(\e):=
\frac{\log(1/\e)}{\log(a/\mu)}
-\frac{\log\log(1/\e)}{\log\mu}+H(\e)
\end{align*}
and $H$ is a multiplicatively periodic continuous nonrandom function with period $a/\mu$.
\end{theorem}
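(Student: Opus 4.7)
The plan is to control $\P(W<\e,\,K=k)$ for every $k$ and identify the dominant value. Since $\{K\ge k\}=\{Z_{k-1}=\mu^{k-1}\}$, the branching property yields
\begin{align*}
\P(W<\e,\,K=k)=p_\mu^{(\mu^{k-1}-1)/(\mu-1)}\cdot E\!\left[\one\{Z_k>\mu^k\}\,\P\!\left(S_{Z_k}<\e a^k\right)\,\Big|\,Z_{k-1}=\mu^{k-1}\right],
\end{align*}
where $S_z$ denotes the sum of $z$ independent copies of $W$. The first factor is the combinatorial cost of being $\mu$-regular through generation $k-1$; inside the expectation, the random excess $\Delta:=Z_k-\mu^k\ge 1$ has, for small values, a distribution close to Poisson with mean $\lambda_k:=\mu^{k-1}p_{\mu+1}/p_\mu$, reflecting the most likely way a single parent in generation $k-1$ produces one extra child.

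The crucial analytic input is a sharp asymptotic for the left tail $F(\e):=\P(W<\e)$,
\begin{align*}
-\log F(\e)\sim \Psi\!\bigl(\log(1/\e)\bigr)\,\e^{-\beta},\qquad \beta=\frac{\log\mu}{\log(a/\mu)},
\end{align*}
with $\Psi$ bounded, continuous and periodic of period $\log(a/\mu)$, obtained from the functional equation $\phi(s)=f(\phi(s/a))$ for the Laplace transform of $W$, where $f$ is the offspring generating function. A matching two-sided estimate $\P(S_z<y)\asymp F(y/z)^z$ on the logarithmic scale, combined with the identity $a^{-\beta}=\mu^{-(1+\beta)}$, gives
\begin{align*}
-\log\P\!\bigl(S_{\mu^k(1+u)}<\e a^k\bigr)\sim \Psi\!\bigl(\log(1/\e)\bigr)\,\e^{-\beta}(1+u)^{1+\beta}.
\end{align*}

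Substituting and writing $\Delta=u\mu^k$, the exponent $-\log\P(W<\e,K=k)$ takes, up to lower-order terms, the form
\begin{align*}
\frac{\log(1/p_\mu)}{\mu-1}\,\mu^{k-1}+c\,\e^{-\beta}\bigl(1+\Delta/\mu^k\bigr)^{1+\beta}-\Delta\log(e\lambda_k/\Delta),
\end{align*}
which I would optimise first over $\Delta\ge 1$ and then over $k$. The unconstrained first-order optimum is $\Delta^{\ast}_k=\lambda_k\exp\!\bigl(-c(1+\beta)\,\e^{-\beta}/\mu^k\bigr)$; imposing the integrality constraint $\Delta\ge 1$ forces $(k-1)\log\mu\gtrsim c(1+\beta)\mu^{n_0-k}$ with $n_0:=\log(1/\e)/\log(a/\mu)$, and solving this (by taking logarithms again) gives $k\approx n_0-\log\log(1/\e)/\log\mu$, reproducing the leading structure of $\gamma(\e)$. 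The multiplicatively periodic function $H(\e)$ then arises from combining the periodicity of $\Psi$ with the integer rounding of the optimal $k$.

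The concentration statement then follows by checking that perturbing $k$ by more than one unit away from the minimiser shifts the dominant term $\mu^{k-1}$ by a factor bounded away from one, forcing $-\log\P(W<\e,K=k)$ to exceed the minimum by a diverging amount as $\e\downarrow 0$. The main obstacle is proving the sharp left-tail asymptotic with the periodic prefactor $\Psi$ and the matching upper bound $\P(S_z<y)\lesssim F(y/z)^z$ on the correct scale; once these and the Poisson approximation for the number of non-minimal parents in generation $k-1$ are in hand, the optimisation and concentration arguments are routine but require careful bookkeeping of the $\log\log$ correction and of the periodic contribution, effects that are invisible in the softer $\mu=1$ analysis.
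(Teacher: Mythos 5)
Your overall strategy — decompose by the value of $K$, price the regularity constraint by $p_\mu^{(\mu^{k}-1)/(\mu-1)}$, and compare the cost of $\sum W_i<\e a^k$ with the cost of $W<\e$ — is the same strategy the paper follows. The gap is in the precision you invoke, and it is not a bookkeeping gap but a genuine one.

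The estimate $\P(S_z<y)\asymp F(y/z)^z$ on the logarithmic scale is too coarse to prove anything here. Plug in $z=\mu^k$, $y=\e a^k$ and use the Fleischmann--Wachtel form $-\log F(\e)\sim M(\e)\e^{-\a}$ with $M$ multiplicatively periodic of period $a/\mu$: then
\begin{align*}
z\,(-\log F(y/z))=\mu^k\,M\big(\e(a/\mu)^k\big)\big(\e(a/\mu)^k\big)^{-\a}=M(\e)\e^{-\a},
\end{align*}
because $\mu^k(a/\mu)^{-k\a}=1$ and $M$ is $(a/\mu)$-periodic. So on the scale you propose, the exponent of $\P(S_{\mu^k}<\e a^k)$ is \emph{identical} to that of $\P(W<\e)$ for every $k$, and the combinatorial $p_\mu$-powers cancel in the same way (the prefactor $L$ of $F$ also carries $p_\mu^{-1/(\mu-1)}$). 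In other words, every term your approximation sees vanishes in the ratio $\P(K>k\mid W<\e)$, and the entire signal lives in a correction you do not control. The paper's Proposition~\ref{p_main} is exactly the non-routine input that recovers this correction: a saddle-point/contour computation of $\P(\sum_{j\le q\mu^{\kappa-n}}W_j<\e a^{\kappa-n})$ that exposes the extra factor $\exp\{-q\mu^{\kappa}\psi_n(\phi(u_q))\}$, where $\psi_n$ measures the defect between $f_n$ and its B\"ottcher limit. It is $\mu^{\kappa}\psi_n(\phi(u_1))$ (Lemma~\ref{l_test}(c), $\asymp \e^{\a(\mu^{-\{\gamma\}-d}-1)}/\log(1/\e)$) that makes $\P(K>\lceil\gamma\rceil-1\mid W<\e)\to1$ and $\P(K>\lceil\gamma\rceil+1\mid W<\e)\to0$, and both the $\log\log$ shift and the periodic $H$ are visible only through it.

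Two secondary points. First, the optimisation over the excess $\Delta$ and the Poisson heuristic are not needed for Theorem~\ref{t_main} at all: the event $\{K>k\}$ is the exact event $\{Z_k=\mu^k\}$, so one only compares $\P(\sum_{i\le\mu^k}W_i<\e a^k)$ with $\P(W<\e)$; the excess structure belongs to Theorem~\ref{t_prop}. Folding it in here adds an extra variable to optimise without helping the concentration claim. Second, your heuristic optimisation does correctly reproduce the leading shape of $\gamma(\e)$, including the $\log\log$ term, which is a good sanity check; but reproducing the shape is weaker than establishing concentration on two integers, and the latter requires the sharp correction term described above rather than an $\asymp$-level tail bound.
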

\medskip

Before giving more detailed results on the shape of the conditioned tree, we give an interpretation of~Theorem~\ref{t_main} and
put it into context. 
To this end we denote by $\mathcal T$ the space of all rooted trees with the
property that every vertex has finite degree. A metric~$d$ on this space is uniquely determined by the requirement that $d(T_1,T_2)=e^{-n}$,
when $n$ is maximal with the property that the trees $T_1$ and $T_2$ coincide up to the $n$th generation. This makes $(\mathcal T,d)$ a
complete, separable metric~space. The next results also holds when~$\mu=1$.%
\medskip%

\begin{corollary} \label{T}
As $\eps\downarrow 0$, conditionally on the event $\{W<\eps\}$ the tree $T$ converges in law on
$(\mathcal T, d)$ to the regular $\mu$-ary tree, i.e., the tree in which every vertex has exactly $\mu$ offspring.
\end{corollary}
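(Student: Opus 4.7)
My plan is to reduce the statement to a claim about the random variable $K$ that is already controlled by the previous results. Since the limit object, the regular $\mu$-ary tree $T_\mu$, is deterministic, convergence in law of $T$ to $T_\mu$ on $(\mathcal T,d)$ is equivalent to convergence in probability, and by the very definition of the metric $d$ this in turn is equivalent to showing that for every fixed $n\in\N$,
\begin{equation*}
\P\bigl(T \text{ and } T_\mu \text{ agree up to generation } n \,\big|\, W<\e\bigr) \longrightarrow 1 \quad \text{as } \e\downarrow 0.
\end{equation*}

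The key structural observation I would use next is that because each vertex has at least $\mu$ offspring almost surely, one has $Z_k\geq\mu^k$ for all $k$ deterministically, so $T$ agrees with $T_\mu$ up to generation $n$ precisely when $Z_k=\mu^k$ for every $k\leq n$. This is exactly the event $\{K>n\}$, and the corollary reduces to proving
\begin{equation*}
\P(K>n \,|\, W<\e) \longrightarrow 1 \quad \text{for every fixed } n\in\N,
\end{equation*}
i.e., that $K\to\infty$ in probability under the conditioned law.

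For $\mu\geq 2$ this is immediate from Theorem~\ref{t_main}: with conditional probability tending to $1$ one has $K\geq\lceil\gamma(\e)\rceil$, and $\gamma(\e)\to\infty$ as $\e\downarrow 0$ because the leading term $\log(1/\e)/\log(a/\mu)$ diverges while the correction $-\log\log(1/\e)/\log\mu+H(\e)$ is of lower order (the multiplicatively periodic continuous $H$ being bounded). For $\mu=1$ I would apply Proposition~\ref{Prop:mu=1} as follows: given $n$ and $M\geq 1$, take $\e$ small enough that $\gamma(\e)-n\geq M$; then
\begin{equation*}
\P(K\leq n\,|\, W<\e)\leq \P\bigl(|K-\gamma(\e)|\geq M\,\big|\, W<\e\bigr),
\end{equation*}
and the $\limsup$ as $\e\downarrow 0$ is bounded by $e^{-\lambda M}$. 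Letting $M\to\infty$ yields $\P(K\leq n\,|\, W<\e)\to 0$, completing the argument.

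There is no serious obstacle here; all of the analytic work has been done in Proposition~\ref{Prop:mu=1} and Theorem~\ref{t_main}. The only conceptual step is the identification $\{T$ agrees with $T_\mu$ through generation $n\}=\{K>n\}$, which is immediate from the a.s.\ lower bound $Z_k\geq\mu^k$ coming from $\mu$ being the essential minimum of the offspring distribution. Convergence in law to a deterministic limit on the separable metric space $(\mathcal T,d)$ is then a routine reformulation.
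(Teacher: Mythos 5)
Your proof is correct and takes essentially the same approach as the paper: the paper reduces the statement to $\lim_{\eps\downarrow 0}\P(Z_k=\mu^k\mid W<\eps)=1$ for every fixed $k$, which is exactly your $\P(K>n\mid W<\eps)\to 1$ via the identity $\{K>n\}=\{Z_k=\mu^k,\ k\le n\}$, and it is settled by Proposition~\ref{Prop:mu=1} when $\mu=1$ and by Theorem~\ref{t_main} when $\mu>1$. You simply spell out more explicitly the topological reformulation, the structural equivalence coming from $Z_k\geq\mu^k$ a.s., and the fact that $\gamma(\eps)\to\infty$.
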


\begin{proof}
The statement is equivalent to
$\lim_{\eps\downarrow 0} \P( Z_k=\mu^k \, | \, W<\eps) = 1$, for all $k\in\N$.
This follows directly from~\eqref{mu=1result} in the
case $\mu=1$, and from Theorem~\ref{t_main} in the case $\mu>1$.
\end{proof}
\medskip

From the point of view of large deviations theory this result is quite surprising, at least at a first glance.
One would expect that the limiting behaviour represents the optimal strategy by which the event $W=0$ is realized  and
that this strategy depends on the details of the law of~$N$. 
There seems to be no good
reason why in the limit the \emph{growth rate} of the tree should drop dramatically, or in fact why it should drop at all,
as we only require the constant to be small.  Above all, the probability of seeing a $\mu$-ary tree up to the $n$th generation
may be arbitrarily small and can certainly be much smaller than those of seeing other trees satisfying~$Z_n\leq \eps a^n$.
\medskip%

This becomes even more intriguing if the result is put in the context of \emph{entropic repulsion}, an expression used by physicists to convey
the idea that entropy maximisation may force certain systems to obey properties that are not obviously imposed on them \emph{a priori}. This phenomenon has been
studied mathematically by Bolthausen et al~\cite{bdg} in the context of the two-dimensional harmonic crystal with hard wall repulsion,
and by Benjamini and Berestycki~\cite{BB} and~\cite{bb2}, where it is shown that conditioning a one-dimensional Brownian motion on some self-repelling behaviour may force
the process to satisfy a strongly amplified version of the constraint. Usually, the reason entropic repulsion may arise is in order to increase the entropy of the
system, i.e., make room for fluctuations. Thus the eventual state of the system is a compromise between the energy cost of adopting an unusual behaviour and the
entropic benefits. Corollary~\ref{T} may be cast in this framework, as it shows that the effect of requiring the constant $W$ to be small is to reduce the
overall exponential growth rate from $a$ to $\mu$. If the limiting state of the system is non-random, as it is the case in our model, what could the
entropic benefits possibly be?
\medskip

The resolution of this apparent paradox comes from understanding the inhomogeneity
of the optimal strategy. While the growth rate $\log a$ is purely asymptotic, i.e.\ depends only on the offspring numbers
\emph{after} any given generation, the growth constant $W$ depends heavily on the \emph{initial} generations of the tree.
Roughly speaking, the collection of trees which form the optimal strategy to achieve $W<\eps$ have minimal offspring for roughly $\gamma(\eps)$
generations, which causes high entropic and energetic cost but only for a small number of generations,
and then after a while switch to  growth with the natural rate~$\log a$. The initial behaviour ensures that $W$ is small
at a minimal probabilistic cost, because for all but a small number of generations the trees
can have their natural growth. The topology on $\mathcal T$ compares trees starting from their root
so that in the limit we only see the behaviour in the initial generations. This leads to a limiting
object with minimal growth rate at all generations and creates the illusion of a drop in the growth
rate for the optimal strategy. A somewhat similar phenomenon is observed by Bansaye and Berestycki \cite{BanBer}
in the context of branching processes in random environment, although they consider situations where the growth rate
is directly conditioned to be atypical.
\bigskip

In the following two theorems we return to the case $\mu>1$ and take a closer look at the \emph{shape of the conditioned tree}
and thus on the inhomogeneous strategy underlying
the conditioning event $W<\eps$.
Figure~1 sketches the curve $\gamma$ and, for each $\e$, the two possible values for $K$, namely
$\lceil\gamma(\e)\rceil$ and $\lceil\gamma(\e)\rceil+1$, represented by the horizontal lines.
Roughly speaking, we will see in Theorem~\ref{t_main2} that for most $\e$ the random variable~$K$ has a particular non-random
value, represented by the thick horizontal lines.  For most values of $\eps$ we have  $K=\lceil\gamma(\e)\rceil$ and only very occasionally
$K=\lceil\gamma(\e)\rceil+1$. The switch happens when  $\gamma(\e)$ gets too close to the integer $\lceil\gamma(\e)\rceil$. Then, for a short
range of values of~$\e$, marked in grey on the zoomed picture, $K$ is truly random and can take the values $\lceil\gamma(\e)\rceil$
and $\lceil\gamma(\e)\rceil +1$. As $\eps$ decreases further $\lceil\gamma(\e)\rceil$ loses its power, and $K$ moves to $\lceil\gamma(\e)\rceil +1$.
This, in turn, does not last long  because when $\eps$ decreases just a little more the curve $\gamma$ crosses an integer level, and then
for another long range of $\e$ the random variable~$K$ takes the value~$\lceil\gamma(\e)\rceil$ again.%
\medskip%

\begin{figure}[h]
\label{pic}
\scalebox{0.8}{\input{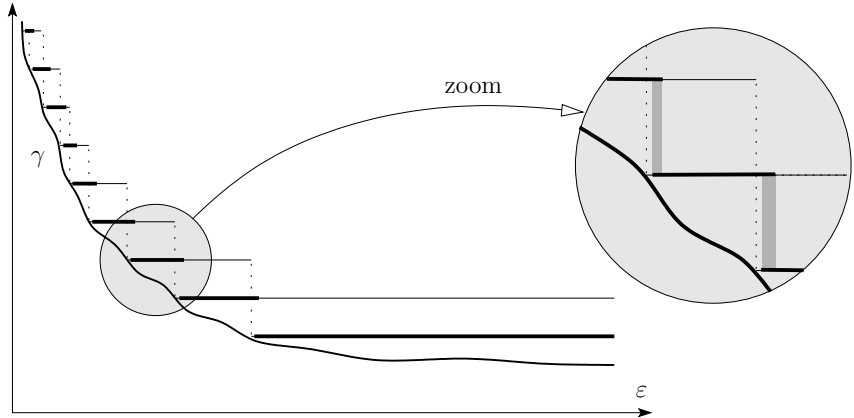}}
\caption{The time of the first branching with more than the minimal offspring.}
\end{figure}

In order to be able to formulate this precisely, we need to identify the different regions of $\e$.
Let $\beta:=\log\mu/\log a$ and $\alpha:=\frac{\beta}{1-\beta}$, and denote
$\{x\}:=\lceil x\rceil -x$, for $x\in\R$.
Further, denote
\begin{align*}
\omega(\e):=\frac{\e^{\alpha(\mu^{-\{\gamma(\e)\}}-1)}}{\log(1/\e)}.
\end{align*}
It is easy to see that
$$\liminf\limits_{\e\downarrow 0} \omega(\e)=0
\qquad\text{and}\qquad \limsup\limits_{\e\to 0} \omega(\e)=\infty,$$
where the liminf is achieved via values of $\e$ for which $\{\gamma(\e)\}$ is very small,
and the limsup is achieved via most other values of $\e$.
%

In the sequel, if $(q_j)$ is a sequence of
probabilities we write $0 \prec q_j \prec 1$ to denote that the sequence is asymptotically bounded away from zero and one.
For sequences $(a_j)$, $(b_j)$ of positive numbers we use the symbol $a_j \asymp b_j$ to denote that $a_j/b_j$ is asymptotically
bounded away from zero and infinity.
\medskip


\begin{theorem}\ \\[-5mm]
\label{t_main2}

\begin{itemize}
\item[(a)] Suppose $\eps_j\downarrow 0$ such that $\omega(\eps_j)\to\infty$. Then
\begin{align*}
\lim_{j\to \infty}\P\big(\, K = \lceil\gamma(\e_j)\rceil \big | \ W<\e_j\big)=1.
\end{align*}
\item[(b)]  Suppose $\eps_j\downarrow 0$ such that $\omega(\eps_j)\asymp 1$,
then
\begin{align*}
0 \prec  \P\big(\, K = \lceil\gamma(\e_j)\rceil \, \big| \, W<\e_j\big),
\P\big(\, K = \lceil\gamma(\e_j)\rceil+1 \, \big| \, W<\e_j\big) \prec 1.
\end{align*}
\item[(c)] Suppose $\eps_j\downarrow 0$ such that $\omega(\eps_j)\to0$. Then
\begin{align*}
\lim_{j\to\infty} \P\big(\, K = \lceil\gamma(\e_j)\rceil+1 \, \big| \, W<\e_j\big)=1.
\end{align*}
\end{itemize}
\end{theorem}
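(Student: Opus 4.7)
The plan is to leverage Theorem~\ref{t_main}: since conditionally on $\{W<\e\}$ the variable $K$ concentrates on $\{\lceil\gamma(\e)\rceil,\lceil\gamma(\e)\rceil+1\}$ with probability tending to one, it suffices to analyse the ratio
\[
R(\e):=\frac{\P(K=k,W<\e)}{\P(K=k+1,W<\e)},\qquad k:=\lceil\gamma(\e)\rceil,
\]
and show $R(\e)\to\infty$ in case (a), $R(\e)\asymp 1$ in case (b), $R(\e)\to 0$ in case (c). For each $j\in\{k,k+1\}$ I would decompose, using the branching identity $W=a^{-j}\sum_{i=1}^{Z_j}W_i$,
\[
\P(K=j,W<\e)=p_\mu^{(\mu^{j-1}-1)/(\mu-1)}\,\E\big[\Phi(Z_j,\e a^j)\,\mathbf{1}\{Z_j>\mu^j\}\,\big|\,Z_{j-1}=\mu^{j-1}\big],
\]
where $\Phi(z,t):=\P\big(\sum_{i=1}^z W_i<t\big)$ with iid copies $W_i$ of $W$.

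Next I would insert the sharp small-deviation asymptotic for $\Phi$, presumably established in preceding lemmas by a Biggins--Bingham--Liu type analysis of the left tail of $W$: one has $-\log\Phi(z,t)=c(\e)\,z^{1+\alpha}/t^\alpha\,(1+o(1))$ in the relevant regime, with a multiplicatively $(a/\mu)$-periodic prefactor $c(\e)$. The key algebraic identity $\mu^{j(1+\alpha)}=a^{j\alpha}$ (equivalent to $\alpha(1-\beta)=\beta$) makes $\log\Phi(\mu^j,\e a^j)\sim -c(\e)\e^{-\alpha}$ \emph{independently of} $j$; linearising in the excess $m:=Z_j-\mu^j$ gives
\[
\log\Phi(\mu^j+m,\e a^j)=-c(\e)\e^{-\alpha}-\lambda_j m+o(1),\qquad \lambda_j:=c(\e)(1+\alpha)\e^{-\alpha}\mu^{-j}.
\]
Writing $g(x):=f(x)/x^\mu=p_\mu+p_{\mu+1}x+\cdots$, the conditional distribution of $m$ has generating function $g(x)^{\mu^{j-1}}$, so summing the previous display against this distribution yields
\[
\P(K=j,W<\e)\asymp p_\mu^{(\mu^j-1)/(\mu-1)}\,e^{-c(\e)\e^{-\alpha}}\,\big[(1+\theta_j)^{\mu^{j-1}}-1\big],
\]
where $\theta_j:=g(e^{-\lambda_j})/p_\mu-1\sim(p_\nu/p_\mu)e^{-(\nu-\mu)\lambda_j}$ as $\lambda_j\to\infty$, with $\nu:=\min\{n>\mu:p_n>0\}$.

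Cancelling the common factors in the ratio leaves
\[
R(\e)\asymp p_\mu^{-\mu^k}\,\frac{(1+\theta_k)^{\mu^{k-1}}-1}{(1+\theta_{k+1})^{\mu^k}-1}.
\]
A direct substitution of $k=\lceil\gamma(\e)\rceil$ using $\mu^\gamma=\mu^H\e^{-\alpha}/\log(1/\e)$ and choosing $H$ precisely to absorb the periodic prefactor $c(\e)$ identifies $\mu^{k-1}\theta_k\asymp\omega(\e)$. Since $\lambda_{k+1}=\lambda_k/\mu$ with $\lambda_k\to\infty$, $\mu^k\theta_{k+1}$ is always much larger than $\mu^{k-1}\theta_k$, and in fact it dominates $\mu^k|\log p_\mu|$ in case~(a) but is dominated by it in case~(c). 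This produces the three regimes: in (a), $(1+\theta_k)^{\mu^{k-1}}-1\sim e^{\omega(\e)}$ is large while the doubly exponential $p_\mu^{-\mu^k}$ overwhelms $(1+\theta_{k+1})^{\mu^k}-1$, giving $R\to\infty$; in (c), $(1+\theta_k)^{\mu^{k-1}}-1\asymp\omega(\e)\to 0$ while $(1+\theta_{k+1})^{\mu^k}-1$ explodes past $p_\mu^{-\mu^k}$, giving $R\to 0$; in (b) all factors are balanced.

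The hardest step is the sharp asymptotic for $\Phi(z,t)$ with the correct multiplicatively periodic constant and enough uniformity in $(z,t)$: this is where $H$ in $\gamma(\e)$ must be calibrated so that the $\e$-dependence in $\mu^{k-1}\theta_k$ lines up exactly with $\omega(\e)$, since any multiplicative mismatch would blur the threshold rather than produce a sharp dichotomy. A secondary worry is showing that the contribution of large excess $m$ (of order $\mu^j$) to the sum over $m$ is negligible, for which the polynomial growth of $(1+m/\mu^j)^{1+\alpha}$ in the exponent, combined with the rapid decay of $[x^m]g(x)^{\mu^{j-1}}$ away from the natural saddle, should suffice.
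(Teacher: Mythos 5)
Your plan goes through $R(\e):=\P(K=k,W<\e)/\P(K=k+1,W<\e)$ by computing $\P(K=j,W<\e)$ directly for each~$j$, which requires summing the small-deviation probability $\Phi(\mu^j+m,\e a^j)$ against the distribution of the excess $m=Z_j-\mu^j$ given $Z_{j-1}=\mu^{j-1}$. The paper's route is simpler: it computes $\P(K>k\mid W<\e)$, where the event $\{K>k\}$ is exactly $\{Z_k=\mu^k\}$, so no sum over $m$ is needed — one evaluates $\Phi$ at the single argument $\mu^k$. Combined with Theorem~\ref{t_main}, which says $K\in\{k,k+1\}$ with conditional probability $\to 1$, this already yields $\P(K=k+1\mid W<\e)=\P(K>k\mid W<\e)+o(1)$ and $\P(K=k\mid W<\e)=1-\P(K>k\mid W<\e)+o(1)$. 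Your decomposition over $m$ essentially anticipates work that in the paper is done only later, in the proof of Theorem~\ref{t_prop}, where it is unavoidable; here it is an unnecessary burden.

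More importantly, there is an algebraic error that breaks your case analysis. You approximate $\Phi(\mu^j+m,\e a^j)=e^{-c(\e)\e^{-\alpha}}e^{-\lambda_j m}(1+o(1))$ with a $j$-independent factor $e^{-c(\e)\e^{-\alpha}}$, and from this conclude $R(\e)\asymp p_\mu^{-\mu^k}\dfrac{(1+\theta_k)^{\mu^{k-1}}-1}{(1+\theta_{k+1})^{\mu^k}-1}$. But $\Phi(\mu^j,\e a^j)$ is not $j$-independent at the required precision: by Proposition~\ref{p_main} (with $q=1$, $n=\kappa-j$) it carries the prefactor $p_\mu^{-\mu^j/(\mu-1)}$, which varies between $j=k$ and $j=k+1$ by exactly $p_\mu^{-\mu^k}$, and this cancels the $p_\mu^{-\mu^k}$ you extracted from the $\P(Z_{j-1}=\mu^{j-1})\cdot p_\mu^{\mu^{j-1}}$ factors. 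After the cancellation the doubly-exponential term disappears, and what remains is governed instead by the ratio $\exp\{\mu^\kappa[\psi_{\kappa-k-1}(\phi(u_1))-\psi_{\kappa-k}(\phi(u_1))]\}$, not by $p_\mu^{-\mu^k}$. With the erroneous $p_\mu^{-\mu^k}$ present, your formula would give $R\to\infty$ in all three regimes (because $\theta_{k+1}\to 0$, so $(1+\theta_{k+1})^{\mu^k}=e^{o(\mu^k)}=o(p_\mu^{-\mu^k})$), contradicting cases (b) and (c). The consistency check is precisely the quantity $\mu^\kappa\psi_{\kappa-k}(\phi(u_1))\asymp\omega(\e)$ from Lemma~\ref{l_test}; once the spurious $p_\mu^{-\mu^k}$ is removed, $R\approx e^{\mu^\kappa\psi_{\kappa-k}(\phi(u_1))}-1$, which does exhibit the claimed trichotomy. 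Finally, your "secondary worry" about large $m$ is not secondary at all — in the paper the analogous sum is controlled via Lemmas~\ref{card} and~\ref{taylor} and a careful Lagrange-multiplier optimisation, and that work cannot be waved away.
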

\smallskip

{\bf Remark:}
It is possible to compute the exact asymptotics in the second regime but we do not want to overload
the paper with unpleasant computations.
\medskip

Next, we address the question of what happens in the generation where the first non-minimal branching occurs.
We denote
\begin{equation}\label{lambdadef}
\lambda:=\min \{ n>\mu \colon p_n>0 \}.
\end{equation}
\medskip

\begin{theorem}
\label{t_prop}
If $\e_j \downarrow  0$ such that  $\omega(\eps_j)\to\infty$ or $\omega(\eps_j)\to0$, then
\begin{align*}
\lim_{j\to\infty}\frac{Z_{K}-\mu^{K}}{\mu^{K}\e_{j}^{\alpha \mu^{\gamma(\eps_j)-{K}}}}
= \big(\sfrac{\lambda}{\mu}-1\big)p_{\lambda}p_{\mu}^{-\frac{\lambda-1}{\mu-1}},
\end{align*}
in probability under $\P( \,\cdot\, | \, W<\eps_j)$.
\end{theorem}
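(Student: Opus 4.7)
The plan is to combine Theorem~\ref{t_main2} with a precise small-deviation analysis of $W$ to pin down the joint law of the non-minimal branchings in generation $K-1$. By Theorem~\ref{t_main2}, in each of the two regimes $K$ takes the deterministic value $k^*=k^*(\e_j)$ (equal to $\lceil\gamma(\e_j)\rceil$ when $\omega(\e_j)\to\infty$ and $\lceil\gamma(\e_j)\rceil+1$ when $\omega(\e_j)\to 0$) with $\P(\,\cdot\mid W<\e_j)$-probability tending to $1$, so I would work on $\{K=k^*\}$. There the tree is regular $\mu$-ary up to generation $k^*-1$, hence $Z_{k^*-1}=\mu^{k^*-1}$; writing $X\ge 1$ for the number of individuals in that generation with strictly more than $\mu$ offspring and $N_1,\dots,N_X\ (\ge\lambda)$ for their offspring counts, one has $Z_{k^*}-\mu^{k^*}=\sum_{i=1}^X(N_i-\mu)$. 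It therefore suffices to identify the conditional joint law of $(X,N_1,\dots,N_X)$ given $\{K=k^*,\,W<\e_j\}$.

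Using the martingale decomposition $W=a^{-k^*}\sum_{j=1}^{Z_{k^*}}W_j$ with iid copies $W_j$ of $W$, the event $\{W<\e\}$ becomes $\{\sum_{j=1}^{\mu^{k^*}+\Delta}W_j<\e a^{k^*}\}$ with $\Delta:=\sum_i(N_i-\mu)$. Combining the combinatorial prior for the configuration at generation $k^*-1$ with a precise small-deviation asymptotic for $\sum W_j$, I expect
\begin{align*}
\P(X=x,\,N_i=n_i\mid K=k^*,\,W<\e)\;\propto\;\binom{\mu^{k^*-1}}{x}\prod_{i=1}^x\tilde p_{n_i},
\end{align*}
with tilted weights $\tilde p_n=(p_n/p_\mu)\,\rho^{\,n-\mu}$ for some base $\rho=\rho(\e,k^*)$. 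To identify $\rho$, I would use a sub-leading expansion of $\phi(s)=\E e^{-sW}$: iterating the functional equation $\phi(s)=f(\phi(s/a))$ and setting $\psi:=-\log\phi$ yields, as $s\to\infty$,
\begin{align*}
\psi(s)=A(\log s)\,s^\beta+\tfrac{\log p_\mu}{\mu-1}+O\big(e^{-(\lambda-\mu)A(\log s)s^\beta/\mu}\big),
\end{align*}
with $A$ multiplicatively periodic of period $\log a$. A saddle-point Laplace inversion at $\psi'(s^*)=y/M$ then converts the constant term of $\psi$ into a prefactor $p_\mu^{-M/(\mu-1)}$ and the leading term into the rate $c_I M^{1+\alpha}/y^\alpha=c_I\e^{-\alpha}(1+\Delta/\mu^{k^*})^{1+\alpha}$ (using $\mu^{1+\alpha}=a^\alpha$). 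Taylor-expanding to first order in $\Delta/\mu^{k^*}$ factorises the tilt over the $N_i$, and after invoking the leading-order identity $c_I(1+\alpha)(\lambda-\mu)\mu^{-H(\e)}=\alpha$ (implicit in the definition of $\gamma(\e)$) one obtains $\rho=p_\mu^{-1/(\mu-1)}\,\e^{\alpha\mu^{\gamma-k^*}/(\lambda-\mu)}$.

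With $\tilde p_n$ identified, the factor $\rho^{n-\mu}$ is exponentially smaller for $n>\lambda$ than for $n=\lambda$, so each $N_i$ concentrates on $\lambda$ and with probability $\to 1$ one gets $Z_{k^*}-\mu^{k^*}=X(\lambda-\mu)$. The simplification $(1/p_\mu)\,p_\mu^{-(\lambda-\mu)/(\mu-1)}=p_\mu^{-(\lambda-1)/(\mu-1)}$ gives $\tilde p_\lambda=p_\lambda\,p_\mu^{-(\lambda-1)/(\mu-1)}\,\e^{\alpha\mu^{\gamma-k^*}}$, and since $\tilde p_\lambda\ll p_\mu$ the conditional law of $X$ is approximately $\mathrm{Poisson}(\hat\mu)$ with
\begin{align*}
\hat\mu=\mu^{k^*-1}\tilde p_\lambda=\mu^{k^*-1}\,p_\lambda\,p_\mu^{-(\lambda-1)/(\mu-1)}\,\e^{\alpha\mu^{\gamma-k^*}}.
\end{align*}
A short computation gives $\hat\mu\asymp\omega(\e_j)$ in regime (a) and $\hat\mu\asymp\omega(\e_j)^{-1}$ in regime (c)---the shift to $k^*=\lceil\gamma\rceil+1$ is exactly what turns a vanishing $\omega$ into a diverging $\hat\mu$---so $\hat\mu\to\infty$ in both regimes. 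A second-moment argument then yields $X/\hat\mu\to 1$ in probability, and the claim follows from
\begin{align*}
\frac{Z_K-\mu^K}{\mu^K\e^{\alpha\mu^{\gamma-K}}}=\frac{X(\lambda-\mu)}{\mu^K\e^{\alpha\mu^{\gamma-K}}}\;\longrightarrow\;\Big(\sfrac{\lambda}{\mu}-1\Big)p_\lambda\,p_\mu^{-(\lambda-1)/(\mu-1)}.
\end{align*}

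The hard part will be the precise small-deviation asymptotic. Obtaining the prefactor $p_\mu^{-(\lambda-1)/(\mu-1)}$ requires pinning down the constant term $(\log p_\mu)/(\mu-1)$ of $\psi(s)$ and carrying it through an $M$-fold product and a saddle-point inversion in a regime where the saddle $s^*$ itself diverges with $\e$; the leading Biggins--Bingham rate $c_I\e^{-\alpha}$ alone is not enough. A secondary subtlety is that the factorisation of the tilt over the $N_i$ rests on $\Delta/\mu^{k^*}\to 0$; this must be verified a posteriori, and follows because $\hat\mu/\mu^{k^*}$ decays polynomially in $\e$ in both regimes.
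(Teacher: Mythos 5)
Your proposal captures the paper's high-level structure — decompose $Z_K-\mu^K$ over the non-minimal branchers in generation $K-1$, use a precise small-deviation asymptotic for $\sum W_j$ (the paper's Proposition~\ref{p_main}), tilt the offspring law, and show the count of non-minimal branchers concentrates — and correctly identifies the tilt $\tilde p_\lambda = p_\lambda\, p_\mu^{-(\lambda-1)/(\mu-1)}\,\eps^{\alpha\mu^{\gamma-K}}$; your $\hat\mu = \mu^{K-1}\tilde p_\lambda$ is precisely the paper's $\Phi_\lambda$. The endgames differ: where you propose a Poisson approximation and a second-moment law of large numbers, the paper works with the exact conditional law $\P(M=m,\,K=\kappa-n\mid W<\eps)$ over multi-indices $m$, splits $\langle m\rangle$ into four ranges, and uses Lagrange optimisation together with the entropy bound $|\M_t|=e^{o(t)}$ (Lemma~\ref{card}) to show that all ranges outside $\big((\lambda-\mu-\delta)\Phi_\lambda,(\lambda-\mu+\delta)\Phi_\lambda\big)$ have vanishing conditional probability.

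There are two genuine gaps. First, the tilted-product form you posit is not exact: the small-deviation probability of $\sum_{i=1}^{q\mu^{\kappa-n}}W_i$ depends on $q(m)=1+\langle m\rangle\mu^{n-\kappa}$ through the whole exponent $qb(\phi(u_q))+yu_q-q\psi_n(\phi(u_q))$, and the Taylor expansion in $\Delta/\mu^K$ that you use to factorise the tilt is valid only for $\langle m\rangle$ of order $\Phi_\lambda$. Ruling out $\langle m\rangle$ of order $\mu^{K-1}$ and larger — where $q(m)$ exits $[1,2]$ and individuals may have arbitrarily many children — requires the coarse estimates of Steps~4--5 of the paper's proof (dropping the positive $\psi_n$-term, Lagrange optimisation, and a convergent geometric sum over $m$), for which your sketch has no analogue; your a posteriori check that $\Delta/\mu^K\to 0$ takes $X\approx\hat\mu$ as given and is therefore circular. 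Second, the small-deviation asymptotic you black-box must be uniform in the number of summands $q\mu^{\kappa-n}$ as $q$ ranges over $[1,2]$, because $q(m)$ varies with $m$; a single saddle-point expansion at $q=1$ would not suffice, which is why the paper proves concavity of $h(q)=q(b\circ\phi)(u_q)+yu_q$ (Lemma~\ref{taylor}) to control the $q$-dependence uniformly.
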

\smallskip

{\bf Remarks:}
\begin{itemize}
\item[(a)] The influence of the first extra branching on the next generation is very small. Roughly, in generation $K-1$  most of the  individuals
still have the minimal  number~$\mu$ of children and only a small proportion of order $\e^{\alpha \mu^{\gamma(\eps)-K}}$ have more than $\mu$ children.
It can be seen from the proof that most of these individuals would have 
exactly $\lambda$~children.  \\[-2mm]
\item[(b)]  Not only does $\omega(\eps)$ govern
the transition between the regimes, it also explicitly controls the number of additional children.
Indeed, in regime (a) in Theorem \ref{t_prop}, when $K=\lceil \gamma(\eps) \rceil$, the number
of extra individuals in generation~$K$~is of order
\begin{align}
\label{regime}
\mu^{K}\e^{\alpha \mu^{\gamma(\eps)-K}}
= \mu^{\gamma(\eps)+\{\gamma(\eps)\}}\e^{\alpha \mu^{-\{\gamma(\eps)\}}},
\end{align}
which is bounded from above and below by constant multiples of $\omega(\eps)$.
This number can be quite large, 
but as we approach the end of the regime
the number 
of extra individuals becomes smaller.  Eventually, there are
no extra individuals which means that there is no more extra branching at time $\lceil \gamma(\eps) \rceil-1$,
and the point of transition moves to $K=\lceil \gamma(\eps) \rceil+1$.  \\[-2mm]
\item[(c)] We conjecture that  the extra branching remains negligible for a few generations (corresponding roughly to
the second term in the definition of $\gamma(\eps)$) and after that the tree starts growing at its normal rate.
\end{itemize}


\section{Proof of Proposition \ref{Prop:mu=1}}
\label{S:proofmu=1}

To prove~\eqref{mu=1result},
decompose the population according to their ancestry in generation~$K$ and~get
\begin{equation}\label{keyrecursion}
W= \frac1{a^K} \sum_{i=1}^{Z_K} W_i =: \frac1{a^K} W',
\end{equation}
where $W_i$ are independent copies of $W$, independent of $Z_K$ and $K$. Note that, as $\mu=1$, the random variable $K$
is independent of $Z_K$  and hence of $W'$. Using the abbreviation $$p_n:=P(N=n),
\qquad \mbox{ for }n \in \N, $$
and letting $\tau:= -\log p_1/\log a$ we get from \cite{Dubuc71b} or an easy argument in~\cite{ortgiese} that there exist constants
$0<c_1<C_1$ such that, for all $0<\eps<1$,
$$c_1\, \eps^\tau \leq \P(W<\eps) \leq C_1\, \eps^\tau.$$
Hence, for $\ell=\gamma(\eps)-z$, $z>0$,
\begin{equation}\begin{aligned}\label{displmu=1}
\P\big(K=\ell \,\big|\, W<\eps\big)  
& \leq c_1^{-1}\, \eps^{-\tau} \,\P( W'<\eps a^\ell)\,\P( K=\ell)\\
& \leq  c_1^{-1}p_1^{-1}\, p_1^\ell \,\eps^{-\tau} \, \P(W<\eps a^\ell)^2\\
& \leq   c_1^{-1}p_1^{-1}C_1^2 \, \exp(\ell \log p_1 +\tau \log \eps+ 2\tau \ell \log a) \\
& = c_1^{-1}p_1^{-1}C_1^2 \, \exp( (\log p_1) z),
\end{aligned}
\end{equation}
where we used (\ref{keyrecursion}) in the second inequality. Summing over all $z\geq x$ gives, for a suitable choice of $\lambda>0$,
$$\P\big(K\leq \gamma(\eps) -x \,\big|\, W<\eps \big) \leq e^{-\lambda x}.$$
Conversely note that, making $\lambda>0$ smaller if necessary,
$$\P\big(K>\gamma(\eps)+z\, \big| \,W<\eps\big)\leq c_1^{-1}\,  \eps^{-\tau} \, p_1^{\gamma(\eps)} \, p_1^z
\leq  e^{-\lambda z},$$
completing the proof of~\eqref{mu=1result}.

\section{Notation and background}

In this section we prepare the proof of our main theorems. We start by  introducing some additional
notation and background from the paper~\cite{FW09} by Fleischmann and Wachtel, on which our proofs are based.
In the sequel, we often omit the  argument $\e$ from $\gamma(\eps), \omega(\eps)$
and similar expressions to shorten the formulas.  We always assume that $\e$ is small
enough. Let
\begin{equation}
\label{kappadef}
\kappa(\e):=\left\lfloor \frac{\log(1/\e)}{\log(a/\mu)}\right\rfloor,
\end{equation}
and denote $y(\e):=\e (a/\mu)^{\kappa(\e)}\in (\mu/a, 1].$
Let  $$\phi(z):=\E e^{-zW}, \qquad \mbox{ for } z\in\C, \mathcal{R}e (z)\ge 0,$$
be the Laplace transform of $W$ and let
$$f(s):=\sum_{j=0}^{\infty}p_js^j,\qquad \mbox{ for } s\in [0,1], $$
be the offspring generating function of the Galton-Watson tree. Denote $f_0(z):=z$ and
$f_m(z):=f(f_{m-1}(z))$, for $m\in \N$. The \emph{logarithmic B\"ottcher function} is defined by
\begin{align*}
b(s):=\lim_{m\to\infty}\mu^{-m}\log f_m(s), \qquad \mbox{ for } s\in (0,1].
\end{align*}
That the limit exists in the B\"ottcher case follows for instance from Lemma~10 in \cite{FW09}.
Note that $b\circ\phi<0$ on $(0,\infty)$ and recall from Lemma~17 in~\cite{FW09} that the function
$(b\circ \phi)'$ increases from $-\infty$ to $0$ on $(0,\infty)$. Therefore, for any $q\in [1,2]$,
there exists a unique $u_q(\e)>0$ such that
\begin{equation}
\label{def_u}
(b\circ \phi)'(u_q)=-y/q,
\end{equation}
where $y = y(\eps)$ is defined under \eqref{kappadef}.
Observe that since the ranges of $y$ and $q$ are bounded we have
$u_q\in [u_*,u^*]$ for some $0<u_*<u^*$ for all $\e$ and $q$. Define
\begin{equation}
\label{sigmadef}
\sigma_q^2(\e):=\frac{d^2}{d u^2}(b\circ \phi)(u_q)>0,
\end{equation}
where the positivity follows from Lemma~17 in~\cite{FW09}.
\medskip

Let $d\in \{-1,0,1\}$ and
\begin{equation}\label{ndef}
n(\e):=\kappa(\eps)-\lceil \gamma(\e)\rceil -d\, .
\end{equation}
Observe that $n(\eps)\to \infty$, $n(\eps)/\kappa(\eps)\to 0$, and that
$\kappa-n\in \{\lceil \gamma\rceil, \lceil\gamma\rceil+1\}$ if and only if $d\in \{0,1\}$.
Note that $n$ depends on $d$. This dependence is omitted in the notation but we always make it clear if a particular
value of~$d$ is used. If no explicit assumption is made about~$d$, then it is arbitrary  (but independent of~$\e$).
Recall (\ref{lambdadef}) and denote
\begin{equation}
\label{Hdef}
H(\e):= \frac1{\log \mu}\, \log\Big(-\frac{b(\phi(u_1))y^{\alpha}(\lambda-\mu)}{\alpha}\Big).
\end{equation}
Since $y$ is continuous and multiplicatively periodic with period $a/\mu$ so is $u_1 = u_1(\eps)$ and thus so is $H$.
\bigskip

Observe that we may extend the domain of all functions $f_n$ to complex variables $z$ with $|z|\le 1$.
Denote
$\mathcal{D}(\delta,\theta):=\{z\in\C: 0<|z|\le 1-\delta, |\text{arg }z|\le \theta\},$
for $\delta\in (0,1), \theta\in (0,\pi).$
By Lemma~10 in~\cite{FW09} for every $\delta\in (0,1)$ there is $\theta\in (0,\pi)$ such that
$f_m(z)\neq 0$ for all $m$ and $z\in \mathcal{D}(\delta,\theta)$ and $b$ can be extended to an analytic function
on $\mathcal{D}(\delta,\theta)$ by the uniformly converging series
\begin{align*}
b(z)=\log z+\sum_{j=0}^{\infty}\mu^{-j-1}\log\frac{ f_{j+1}(z)}{f_j(z)^{\mu}}.
\end{align*}
Observe that on $\mathcal{D}(\delta,\theta)$ we have
\begin{align*}
\mu^{-m}\log f_m(z)=\log z+\sum_{j=0}^{m-1}\mu^{-j-1}\log\frac{ f_{j+1}(z)}{f_j(z)^{\mu}}
\end{align*}
and denote
\begin{align*}
\psi_m(z):=b(z)-\mu^{-m}\log f_m(z)-\frac{\mu^{-m}}{\mu-1}\log p_{\mu}, \qquad \mbox{ for } z\in  \mathcal{D}(\delta,\theta).
\end{align*}
It is easy to see that
\begin{align}
\label{psi}
\psi_m(z)=\sum_{j=m}^{\infty}\mu^{-j-1}\log\frac{ f_{j+1}(z)}{p_{\mu}f_j(z)^{\mu}}.
\end{align}
This implies, in particular, that $\psi_m(s)>0$ for all $s\in (0,1)$ and all $m\in \N$. Our next aim
is to describe the asymptotic behaviour of $\psi_m$ and $\psi_m'$ as $m\to\infty$.
%
In the sequel we use the Landau symbols $o(f)$ and $O(f)$ to denote nonnegative functions, whose actual definition
can change at every occurrence, with the property that when divided by~$f$ they converge to zero, respectively stay bounded
from above. By Lemma 10 in~\cite{FW09},
\begin{align}
\label{fn}
f_m(z)=p_{\mu}^{-\frac{1}{\mu-1}}\exp\big\{\mu^mb(z)+O(e^{-\delta\mu^m})\big\},
\end{align}
that is, $\psi_m(z)=O(\mu^{-m}e^{-\delta\mu^m})$
uniformly on $\mathcal{D}(\delta,\theta)$ as $m\to\infty$. In the next lemma we compute a much more precise asymptotics
for $\psi_m$.
\medskip

\begin{lemma}
\label{l_psi}
As $m\to\infty$,
\begin{align*}
\psi_m(z)&=p_{\lambda}p_{\mu}^{-\frac{\lambda-1}{\mu-1}}\mu^{-m-1}\exp\big\{(\lambda-\mu)\mu^mb(z)\big\}\, (1+o(1))
\end{align*}
and
\begin{align*}
\psi_m'(z)=\mu^m\psi_m(z)\,O(1)
\end{align*}
uniformly on compact subsets of $\mathcal{D}(\delta,\theta)$. 
\end{lemma}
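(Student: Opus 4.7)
The plan is to exploit the series representation~\eqref{psi} together with the precise asymptotic~\eqref{fn} for the iterates of~$f$. Since $p_n=0$ for $n<\mu$ and for $\mu<n<\lambda$ by definition of~$\mu$ and~$\lambda$, the offspring generating function factors near $0$ as
\begin{align*}
f(s)=p_\mu s^\mu\Bigl(1+\tfrac{p_\lambda}{p_\mu}s^{\lambda-\mu}+O(s^{\lambda-\mu+1})\Bigr),\qquad s\to 0.
\end{align*}
By~\eqref{fn} together with the fact that $\mathcal{R}e\,b(z)$ is bounded above by a negative constant on any compact subset of $\mathcal{D}(\delta,\theta)$, the iterates $f_j(z)$ tend to $0$ uniformly on compacts as $j\to\infty$. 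Substituting $s=f_j(z)$ and taking logarithms yields
\begin{align*}
\log\frac{f_{j+1}(z)}{p_\mu f_j(z)^{\mu}}=\frac{p_\lambda}{p_\mu}\,f_j(z)^{\lambda-\mu}(1+o(1))
\end{align*}
uniformly on compacts, as $j\to\infty$.

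Next I would plug~\eqref{fn} into the right hand side. After simplifying $p_\lambda p_\mu^{-1}\cdot p_\mu^{-(\lambda-\mu)/(\mu-1)}=p_\lambda p_\mu^{-(\lambda-1)/(\mu-1)}$ this gives
\begin{align*}
\mu^{-j-1}\log\frac{f_{j+1}(z)}{p_\mu f_j(z)^\mu}= p_\lambda p_\mu^{-(\lambda-1)/(\mu-1)}\,\mu^{-j-1}\exp\{(\lambda-\mu)\mu^j b(z)\}(1+o(1)).
\end{align*}
Inserting this in~\eqref{psi} and using that the bound $\mathcal{R}e\,b(z)\le-c<0$ on the compact makes $|\exp\{(\lambda-\mu)\mu^j b(z)\}|$ decay doubly exponentially in~$j$, one sees that the summand at $j=m$ absorbs all the later ones, giving the first assertion of the lemma.

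For the derivative, I would avoid termwise differentiation and instead apply Cauchy's integral formula on a small disc. Fix a compact $K\subset\mathcal{D}(\delta,\theta)$ and a slightly enlarged compact $K'\supset K$ still contained in $\mathcal{D}(\delta,\theta)$. For $z\in K$ and $w$ on a circle of radius $r_m$ around~$z$, a first order Taylor expansion of $b$ gives $|\mathcal{R}e\,b(w)-\mathcal{R}e\,b(z)|\le r_m\sup_{K'}|b'|$. Choosing $r_m:=\mu^{-m}$ keeps the exponential $|\exp\{(\lambda-\mu)\mu^m b(w)\}|$ within a bounded multiplicative factor of $|\exp\{(\lambda-\mu)\mu^m b(z)\}|$ on the circle, so the first part of the lemma applied on $K'$ gives $|\psi_m(w)|\le C|\psi_m(z)|$ there. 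Cauchy's estimate then delivers
\begin{align*}
|\psi_m'(z)|\le\frac{1}{r_m}\sup_{|w-z|=r_m}|\psi_m(w)|\le C'\mu^m|\psi_m(z)|,
\end{align*}
which is exactly $\psi_m'(z)=\mu^m\psi_m(z)O(1)$.

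The main obstacle is keeping the $(1+o(1))$ corrections uniform on compact subsets when passing from~\eqref{fn} through the logarithm, and justifying that a single summand dominates the series. Both points rest on the strict negativity and boundedness of $\mathcal{R}e\,b$ on compact subsets of $\mathcal{D}(\delta,\theta)$, which is essentially the content of Lemma~10 in~\cite{FW09}; the remainder of the argument is bookkeeping. The scale $r_m=\mu^{-m}$ in the Cauchy step is forced by the problem: any larger radius loses control of the rapidly varying exponential $\exp\{(\lambda-\mu)\mu^m b(\cdot)\}$, while any smaller radius gives a weaker derivative bound than the one claimed.
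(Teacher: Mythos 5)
Your first part (the asymptotics of $\psi_m$ itself) is essentially the paper's proof: expand $f_{j+1}/(p_\mu f_j^\mu)$ using the gap between $\mu$ and $\lambda$ in the support of the offspring law, take logarithms, feed in the asymptotics~\eqref{fn} for $f_j$, and observe that the strict negativity of $\mathcal{R}e\,b$ on compacts makes the $j=m$ summand dominate the entire tail of the series. The constant $p_\lambda p_\mu^{-(\lambda-1)/(\mu-1)}$ falls out the same way in both arguments.

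For the derivative bound your route is genuinely different and it works. The paper differentiates the series~\eqref{psi} termwise (justified by Weierstrass), then rewrites $\mu^{-j-1}f_j^{\lambda-\mu-1}f_j'$ as $(\mu^{-j}\log f_j)'\,f_j^{\lambda-\mu}\,O(1)$, uses that $(\mu^{-j}\log f_j)'\to b'$ is bounded on compacts, and again lets the $j=m$ term dominate. You instead apply Cauchy's estimate on a disc of radius $r_m=\mu^{-m}$: the Taylor bound $|\mathcal{R}e\,b(w)-\mathcal{R}e\,b(z)|\le r_m\sup_{K'}|b'|$ keeps $\exp\{(\lambda-\mu)\mu^m\mathcal{R}e\,b(\cdot)\}$ within a fixed multiplicative factor over the circle, so the already-proved first part of the lemma on the enlarged compact $K'$ gives $\sup_{|w-z|=r_m}|\psi_m(w)|\le C|\psi_m(z)|$ uniformly, and Cauchy then yields $|\psi_m'(z)|\le C\mu^m|\psi_m(z)|$ (which is the content of $\psi_m'=\mu^m\psi_m\,O(1)$, the $O(1)$ being a bounded complex factor). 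Your scale $r_m=\mu^{-m}$ is indeed forced: it exactly matches the scale on which the dominant exponential $\exp\{(\lambda-\mu)\mu^m b(\cdot)\}$ varies by $O(1)$. The Cauchy route is cleaner in that it avoids the algebra of reorganising $f_j^{\lambda-\mu-1}f_j'$ and the need to control the derivative of $\mu^{-j}\log f_j$; the price is that it reuses the first part of the lemma on a larger compact, which is harmless. Both approaches need and use the same two inputs from Lemma~10 of~\cite{FW09}: analyticity of $b$ on $\mathcal{D}(\delta,\theta)$ and $f_j\to 0$ uniformly on compacts.
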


\begin{proof} Using $f_{j+1}(z)=f(f_j(z))$
we obtain
\begin{align}
\label{bot}
\frac{ f_{j+1}(z)}{p_{\mu}f_j(z)^{\mu}}
&=1+\sum_{l=1}^{\infty}\frac{p_{\mu+l}}{p_{\mu}}f_j^l(z).
\end{align}
As $j\to\infty$, we have $f_j(z)\to 0$ uniformly
on $\mathcal{D}(\delta,\theta)$, and hence also
\begin{align}
\label{bbb}
\frac{ f_{j+1}(z)}{p_{\mu}f_j(z)^{\mu}}
=1+\frac{p_{\lambda}}{p_{\mu}}f_j^{\lambda-\mu}(z)(1+o(1))
\end{align}
and
\begin{align*}
\log \frac{ f_{j+1}(z)}{p_{\mu}f_j(z)^{\mu}}&
=\frac{p_{\lambda}}{p_{\mu}}f_j^{\lambda-\mu}(z)\,\big(1+o(1)\big).
\end{align*}
Substituting this into~\eqref{psi} and taking~\eqref{fn} into account we~get
\begin{align*}
\psi_m(z)
&=\frac{p_{\lambda}}{p_{\mu}}\sum_{j=m}^{\infty}\mu^{-j-1}f_j^{\lambda-\mu}(z)\,(1+o(1))\\
&=p_{\lambda}p_{\mu}^{-\frac{\lambda-1}{\mu-1}}\sum_{j=m}^{\infty}\mu^{-j-1}\exp\big\{(\lambda-\mu)\mu^jb(z)\big\}
(1+o(1))\\
&=p_{\lambda}p_{\mu}^{-\frac{\lambda-1}{\mu-1}}\mu^{-m-1}\exp\big\{(\lambda-\mu)\mu^mb(z)\big\}\,(1+o(1)).
\end{align*}
Substituting~\eqref{bot} into~\eqref{psi} and differentiating the uniformly converging series of analytic functions,
we get
\begin{align*}
\psi'_m(z)
&=\sum_{j=m}^{\infty}\mu^{-j-1}\Big(1+\sum_{l=1}^{\infty}\frac{p_{\mu+l}}{p_{\mu}}f_j^l(z)\Big)^{-1}
\sum_{l=1}^{\infty}l\frac{p_{\mu+l}}{p_{\mu}}f_j^{l-1}(z)f_j'(z).
\end{align*}
Using the leading term of the asymptotics~\eqref{bbb} and
\begin{align*}
\sum_{l=1}^{\infty}l\frac{p_{\mu+l}}{p_{\mu}}f_j^{l-1}(z)=
f_j^{\lambda-\mu-1}(z)\,O(1),
\end{align*}
as $j\to\infty$ uniformly on $\mathcal{D}(\delta,\theta)$,
we obtain
\begin{align*}
\psi'_m(z)
&=\sum_{j=m}^{\infty}\mu^{-j-1}f_j^{\lambda-\mu-1}(z)f_j'(z)\,O(1)
=\sum_{j=m}^{\infty}(\mu^{-j}\log f_j)'(z)
f_j^{\lambda-\mu}(z)\,O(1)\\
&=\sum_{j=m}^{\infty}
\exp\big\{(\lambda-\mu)\mu^j b(z)\big\}\,O(1)
=\exp\big\{(\lambda-\mu)\mu^m b(z)\big\}\,O(1)=\mu^m\psi_m(z)\,O(1),
\end{align*}
where we have used the Weierstrass theorem to justify the convergence of the derivatives
of uniformly converging analytic functions, and also absorbed a factor $b'(z)$ into $O(1)$.
\end{proof}


\section{The lower tail of sums of independent copies of $W$}

The main result in~\cite{FW09} yields the following fine lower tail behaviour of~$W$.

\begin{lemma}
\label{eps}
As $\e\to 0$, recalling (\ref{def_u}) and (\ref{sigmadef}),
\begin{align*}
\P\big(W<\e\big)
&=p_{\mu}^{-\frac{1}{\mu-1}}\frac{1}{\sigma_1 u_1\sqrt{2\pi}}\,
\mu^{-\frac{\kappa}{2}}\, \exp\big\{\mu^{\kappa}\big(b(\phi(u_1))+yu_1\big)+o(1)\big\}.
\end{align*}
\end{lemma}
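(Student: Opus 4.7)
The plan is to derive the sharp asymptotic by inverting the Laplace transform of $W':=a^\kappa W$ via an exponential tilting / saddle-point argument, with the local limit input taken from the main result of Fleischmann and Wachtel~\cite{FW09}. By the branching decomposition at generation $\kappa$, $W'=\sum_{i=1}^{Z_\kappa} W_i$, where the $W_i$ are i.i.d.\ copies of $W$ independent of $Z_\kappa$, and the branching identity $\E z^{Z_\kappa}=f_\kappa(z)$ gives $\E e^{-sW'}=f_\kappa(\phi(s))$. Since $\e a^\kappa=y\mu^\kappa$, we have $\P(W<\e)=\P(W'<y\mu^\kappa)$, and the B\"ottcher asymptotic \eqref{fn} gives, uniformly for $s$ on compact subsets of $(0,\infty)$,
\[
\E e^{-sW'}=p_\mu^{-\frac{1}{\mu-1}}\exp\bigl\{\mu^\kappa b(\phi(s))\bigr\}\bigl(1+O(e^{-\delta\mu^\kappa})\bigr).
\]

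Next, I introduce the exponentially tilted measure $\tilde\P$ with $d\tilde\P/d\P=e^{-u_1W'}/\E e^{-u_1W'}$, where $u_1=u_1(\e)$ is as in \eqref{def_u}. Differentiating $\log f_\kappa(\phi(u))$ at $u=u_1$ once and twice, and using the definitions \eqref{def_u} and \eqref{sigmadef}, we obtain $\tilde\E W'=y\mu^\kappa+O(e^{-\delta\mu^\kappa})$ and $\widetilde{\mathrm{Var}}(W')=\sigma_1^2\mu^\kappa(1+o(1))$. Assuming the corresponding local central limit theorem---namely, that the density of $W'$ under $\tilde\P$ at points $y\mu^\kappa-z$ with $z=O(1)$ equals $(\sigma_1\sqrt{2\pi\mu^\kappa})^{-1}(1+o(1))$---undoing the tilt gives
\begin{align*}
\P(W'<y\mu^\kappa)&=\E e^{-u_1W'}\,\tilde\E\bigl[e^{u_1W'}\one_{\{W'<y\mu^\kappa\}}\bigr]\\
&=\E e^{-u_1W'}\,e^{u_1y\mu^\kappa}\int_0^{y\mu^\kappa}e^{-u_1 z}\,\tilde p_{W'}(y\mu^\kappa-z)\,dz,
\end{align*}
after the change of variable $x=y\mu^\kappa-z$. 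Since $e^{-u_1z}$ concentrates the integrand at $z=O(1)$, where the local CLT applies, the integral is asymptotic to $(u_1\sigma_1\sqrt{2\pi\mu^\kappa})^{-1}$. Multiplying by $\E e^{-u_1W'}=p_\mu^{-1/(\mu-1)}\exp\{\mu^\kappa b(\phi(u_1))\}(1+o(1))$ collects all factors into exactly the formula in the lemma, with the $y$-dependence of $u_1$ and $\sigma_1$ providing the multiplicatively periodic prefactor.

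The hard part will be the local CLT for $W'$ under $\tilde\P$. The CLT itself follows from the moment expansions above, but the local version requires uniform decay of the tilted characteristic function of $W$ away from zero, which in turn rests on delicate regularity properties of the law of $W$ and careful Fourier-analytic estimates on the iterates $f_\kappa$ off the real axis. This analytic input is the essence of Fleischmann and Wachtel's theorem~\cite{FW09}; once their local limit result is quoted, the saddle-point computation above reduces to bookkeeping.
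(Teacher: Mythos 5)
Your computation is correct, but it takes a genuinely different route from the paper. The paper's proof is a one-paragraph translation exercise: it quotes Theorem~1 of Fleischmann--Wachtel~\cite{FW09} directly, which already gives $\P(W<\e)=L(\e)\e^{\alpha/2}\exp\{-M(\e)\e^{-\alpha}+o(1)\}$ with explicit formulas for $L$ and $M$ (their (142) and (155)), and then uses $\e(a/\mu)^{\kappa}=y$, $a=\mu^{1/\beta}$, the definition of $u_1$, and the periodicity of $M$ to rewrite $-M(\e)\e^{-\alpha}=\mu^{\kappa}(b(\phi(u_1))+yu_1)$ and $y^{-\alpha/2}\e^{\alpha/2}=\mu^{-\kappa/2}$. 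No new analysis is needed. Your proposal instead re-derives the result from scratch by the exponential-tilting/saddle-point argument applied to $W'=a^\kappa W$: this is essentially how~\cite{FW09} prove their Theorem~1, and also how the present paper proves the harder Proposition~\ref{p_main} for sums of independent copies of $W$, where no such black-box result is available to quote. The price of your route is that the local CLT for the tilted law of $W'$ is precisely the technical core of~\cite{FW09}, and is not stated there in a directly quotable form (their Theorem~1 is the final probability asymptotic, not the intermediate local CLT), so you would in effect be reconstructing their proof rather than citing it. The benefit is methodological consistency with Proposition~\ref{p_main} and more visible mechanics: your tilted-mean and tilted-variance identifications via $(b\circ\phi)'(u_1)=-y$ and $(b\circ\phi)''(u_1)=\sigma_1^2$, and the Watson-lemma integration $\int_0^\infty e^{-u_1z}\,dz=u_1^{-1}$ against the constant tilted density, do reproduce every factor in the lemma correctly. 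In short: correct bookkeeping, but the paper shortcuts all of it by citing the finished asymptotic.
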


\begin{proof}
By Theorem 1 in~\cite{FW09} we have, as $\e\to 0$,
\begin{align}
\label{tailb}
\P(W<\e)=L(\e)\e^{\frac{\alpha}{2}}\exp\big\{-M(\e)\e^{-\alpha}+o(1)\big\},
\end{align}
where $M$ and $L$ are positive multiplicatively periodic functions with period $a/\mu$ given by
\begin{align*}
M(\e):&=-\e^{\alpha}\min_{v>0}\big\{b(\phi(v))+v\e\big\},\\
L(\e):&=p_{\mu}^{-\frac{1}{\mu-1}}\frac{y^{-\frac{\alpha}{2}}}{\sigma_1u_1\sqrt{2\pi}},
\end{align*}
see formula (142) and (155) in~\cite{FW09}. Using $\e(a/\mu)^\kappa=y$, $a=\mu^{\frac 1 \beta}$, the definition of $u_1$, the periodicity of $M$ and the convexity of $b \circ \varphi$ (see Lemma 17 in \cite{FW09}), we have
\begin{align*}
-M(\e)\e^{-\alpha}=-M(y)\e^{-\alpha}
=y^{\alpha}\e^{-\alpha}\big(b(\phi(u_1))+yu_1\big)=
\mu^{\kappa}\big(b(\phi(u_1))+yu_1\big)
\end{align*}
and
$y^{-\frac{\alpha}{2}}\e^{\frac{\alpha}{2}}=\mu^{-\frac{\kappa}{2}},$
which completes the proof.
\end{proof}
\smallskip

Recall that in our calculation~\eqref{displmu=1} for the case $\mu=1$ we used a crude estimate to bound the lower tail probability of $W'$, the
sum of  finitely many independent copies  $W_1, W_2, \dots$ of the limiting variable $W$. While this estimate holds in general, it is insufficient in the
case $\mu>1$. The main goal of this section is to establish a fine result describing the lower tails of the sum of  independent copies of $W$ in this case.
The proof uses three technical lemmas which, for the
reader's convenience,  are stated and proved after the presentation of the main argument.%
\medskip%

\begin{prop}
\label{p_main}
As $\e\to 0$, with $\kappa$ and $n$ defined in (\ref{kappadef}) and (\ref{ndef}),
\begin{equation}\begin{aligned}
\label{asymp}
\P\Big(\sum_{j=1}^{q\mu^{\kappa-n}}& W_j<\e a^{\kappa-n}\Big)\\
& =p_{\mu}^{-\frac{q\mu^{\kappa-n}}{\mu-1}}\frac{1}{\sigma_q u_q\sqrt{2\pi q}}\,
\mu^{-\frac{\kappa}{2}}\exp\big\{\mu^{\kappa}\big(qb(\phi(u_q))-q\psi_n(\phi(u_q))+yu_q\big)\big\}\, I_q(\e),
\end{aligned}\end{equation}
uniformly in $q\in [1,2]$ such that $q\mu^{\kappa-n}\in \N$, where 
$I_q(\e)$ has the following properties:
\begin{itemize}
\item it is uniformly bounded in $q$ and $\e$;
\item if $\e_j\to 0$ such  that $\mu^{\kappa(\eps_j)}\psi_{n(\eps_j)}(\phi(u_1(\eps_j)))=O(1)$ then
$I_1(\eps_j)\to 1$.
\end{itemize}
\end{prop}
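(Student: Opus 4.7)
The plan is to prove the proposition by the Cram\'er-type saddle-point method, exactly in the spirit of Fleischmann-Wachtel's proof of Lemma~\ref{eps} but carried out for the sum $S:=\sum_{j=1}^{m}W_j$ with $m:=q\mu^{\kappa-n}$ i.i.d.\ copies. For $u>0$ I would introduce the exponentially tilted law $d\tilde\P/d\P:=e^{-uS}/\phi(u)^{m}$, choose $u=u^*$ as the saddle point so that the tilted mean of $S$ equals $\e a^{\kappa-n}$, and then apply a local central limit theorem for $S$ under $\tilde\P$ to evaluate
\begin{equation*}
\P(S<\e a^{\kappa-n})=e^{-mL(u^*)+u^*\e a^{\kappa-n}}\,\tilde\E\bigl[e^{u^*(S-\e a^{\kappa-n})}\mathbf{1}_{S<\e a^{\kappa-n}}\bigr],
\end{equation*}
where $L:=-\log\phi$; the Gaussian integral arising from the tilted fluctuations produces the usual factor $\bigl(u^*\sqrt{-2\pi mL''(u^*)}\bigr)^{-1}$.

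The key step is to exploit the functional equation $\phi(a^n v)=f_n(\phi(v))$ together with the definition of $\psi_n$ to rewrite everything in B\"ottcher coordinates. Setting $u=a^nv$ one obtains
\begin{equation*}
L(a^nv)=\mu^n\bigl[\psi_n(\phi(v))-b(\phi(v))\bigr]+\tfrac{1}{\mu-1}\log p_\mu,
\end{equation*}
and, using $\e a^{\kappa}=y\mu^{\kappa}$, differentiation turns the saddle-point equation $mL'(u^*)=\e a^{\kappa-n}$ into
\begin{equation*}
(b\circ\phi)'(v_n)-(\psi_n\circ\phi)'(v_n)=-y/q,
\end{equation*}
whose solution $v_n$ converges by Lemma~\ref{l_psi} to the $u_q$ defined in~\eqref{def_u}. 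Substituting $u^*=a^nv_n$ into the exponent yields
\begin{equation*}
u^*\e a^{\kappa-n}-mL(u^*)=\mu^{\kappa}\bigl[qb(\phi(v_n))-q\psi_n(\phi(v_n))+yv_n\bigr]-\tfrac{q\mu^{\kappa-n}}{\mu-1}\log p_\mu,
\end{equation*}
which delivers both the prefactor $p_\mu^{-q\mu^{\kappa-n}/(\mu-1)}$ and the claimed exponential factor, up to replacing $v_n$ by $u_q$ inside $b\circ\phi$ and $\psi_n\circ\phi$. A similar double differentiation gives $-mL''(u^*)\sim q\sigma_q^2\mu^{\kappa}/a^{2n}$, so $u^*\sqrt{-2\pi mL''(u^*)}\sim u_q\sigma_q\sqrt{2\pi q}\,\mu^{\kappa/2}$, producing the Gaussian prefactor $\mu^{-\kappa/2}/(u_q\sigma_q\sqrt{2\pi q})$ as claimed.

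The factor $I_q(\e)$ is then defined to collect (i) the finite-$n$ error in replacing $v_n$ by $u_q$ inside the exponent, (ii) the local-CLT error, and (iii) the higher-order terms in the Gaussian integral. Uniform boundedness over $q\in[1,2]$ and small $\e$ is automatic from the fact that $v_n$ and $\sigma_q$ remain in a compact range (since $y\in(\mu/a,1]$ and $q\in[1,2]$ are bounded). For $I_1(\e)\to1$ under the assumption $\mu^{\kappa}\psi_n(\phi(u_1))=O(1)$, I would Taylor-expand $b\circ\phi$ and $\psi_n\circ\phi$ around $u_1$ and verify that each of the three error sources vanishes in the limit; the assumption is exactly what is needed, because $v_n-u_1=O(\psi_n(\phi(u_1)))$ and the exponent carries an amplifying factor $\mu^{\kappa}$.

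The main obstacle, just as in~\cite{FW09}, is making the local CLT quantitative and uniform: the saddle point $u^*\asymp a^n$ diverges with $\e$, so the tilted copies of $W$ have a growing scale, and one must control the characteristic function of the tilted $W$ over a range of frequencies commensurate with the fluctuation scale $\sqrt{-mL''(u^*)}\asymp\mu^{\kappa/2}/a^n$, uniformly in $q\in[1,2]$. I expect this uniform Fourier/complex-analytic control of $\phi$ and its derivatives near $u^*$ to be precisely the content of the three technical lemmas mentioned after the proposition statement.
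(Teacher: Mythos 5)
Your outline is genuinely the same method as the paper: Cram\'er tilting (equivalently, shifting the Fourier inversion contour to height $u_q a^n$), rewriting through the Poincar\'e equation $\phi(a^n v)=f_n(\phi(v))$ and the definition of $\psi_n$ into B\"ottcher coordinates, a local CLT for the tilted sum producing the Gaussian prefactor $\mu^{-\kappa/2}/(\sigma_q u_q\sqrt{2\pi q})$, and then three technical lemmas (Lemmas~\ref{l_4}, \ref{l_3}, \ref{normal} here, resting on Lemmas~13, 15, 16 of \cite{FW09}) to dispose of the tail frequencies and constant term uniformly in $q$.

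The one place you deviate, and where the imprecision matters, is the choice of saddle point. You propose solving the \emph{exact} saddle equation $(b\circ\phi)'(v_n)-(\psi_n\circ\phi)'(v_n)=-y/q$ and then replacing $v_n$ by $u_q$. The paper instead expands directly around the $\e$-independent-in-spirit point $u_q$ defined by $(b\circ\phi)'(u_q)=-y/q$, accepting that $\psi_n\circ\phi$ then contributes a nonzero linear-in-$t$ term $-ita_q$, $a_q=(\psi_n\circ\phi)'(u_q)$. This linear term survives the cancellation $q\cdot(ity/q)-ity=0$ and becomes the oscillation frequency $\rho_2=\mu^{\kappa/2}a_q\sqrt{q/(\sigma_q^2-s_q)}$ in the Gaussian integral defining $I_q$; by Lemma~\ref{l_psi} one has $\psi_n'=O(\mu^n\psi_n)$, so $\rho_2=O(\mu^{n+\kappa/2}\psi_n(\phi(u_1)))$, and the hypothesis $\mu^\kappa\psi_n(\phi(u_1))=O(1)$ forces $\rho_2=O(\mu^{n-\kappa/2})\to0$, whence $I_1\to1$ by Lemma~\ref{normal}. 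Your route is workable but your error estimate is off: you write $v_n-u_1=O(\psi_n(\phi(u_1)))$, whereas by the implicit function theorem and $\psi_n'=O(\mu^n\psi_n)$ one actually gets $v_n-u_q=O(\mu^n\psi_n(\phi(u_q)))$ --- a factor $\mu^n$ larger. With your first-order reasoning (``amplifying factor $\mu^\kappa$'') this would give an error of order $\mu^{\kappa+n}\psi_n$, which under the hypothesis is $O(\mu^n)\to\infty$, not $o(1)$. The rescue is that $v_n$ is a critical point of $q(b-\psi_n)\circ\phi(v)+yv$, so the replacement error in the exponent is second-order, namely $\mu^\kappa\cdot O((v_n-u_q)^2)=O(\mu^{\kappa+2n}\psi_n^2)=O(\mu^{2n-\kappa})\to0$ since $n/\kappa\to0$. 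You should make this second-order cancellation explicit; as written the proposal suggests a first-order estimate that does not close.
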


\begin{proof}
We have
\begin{align*}
\E e^{-t(W_1+\cdots+W_{q\mu^{\kappa-n}})}=\phi^{q\mu^{\kappa-n}}(t)
\end{align*}
and so by the inversion formula for distribution functions
\begin{align*}
\P\Big(\sum_{j=1}^{q\mu^{\kappa-n}}W_j<\e a^{\kappa-n}\Big)=\frac{1}{2\pi}\int_{-\infty}^{\infty}\frac{1-e^{-i\tau \e a^{\kappa-n}}}{i\tau}\,
\phi^{q\mu^{\kappa-n}}(-i\tau)\, d\tau.
\end{align*}
Changing the integration contour we get for arbitrary $p>0$
\begin{align*}
\P\Big(\sum_{j=1}^{q\mu^{\kappa-n}}W_j<\e a^{\kappa-n}\Big)
&=\frac{1}{2\pi}\int_{-\infty}^{\infty}\frac{e^{(p-i\tau) \e a^{\kappa-n}}-1}{p-i\tau}\,\phi^{q\mu^{\kappa-n}}(p-i\tau)\,d\tau.
\end{align*}
Substituting $p=u_qa^n$ and $\tau=ta^n$ and using the Poincar\'e functional
equation $\phi(az)=f(\phi(z))$, we obtain
\begin{align}
\P\Big(\sum_{j=1}^{q\mu^{\kappa-n}}W_j<\e a^{\kappa-n}\Big)
&=\frac{1}{2\pi}\int_{-\infty}^{\infty}\frac{e^{\e a^{\kappa}(u_q-it)}-1}{u_q-it}\phi^{q\mu^{\kappa-n}}(a^n(u_q-it))\,dt\notag\\
&=\frac{1}{2\pi}\int_{-\infty}^{\infty}\frac{e^{y \mu^{\kappa}(u_q-it)}-1}{u_q-it}f^{q\mu^{\kappa-n}}_n(\phi(u_q-it))\,dt.
\label{int}
\end{align}
Recall that $u_*>0$ has been fixed in such a way that $u_q\ge u_*$ for all $\e$ and $q$.
By Lemma 15 from~\cite{FW09} there is a constant $c>0$ such that
for all $\theta\in (0,c]$,
\begin{align*}
\phi(v-it)\in \mathcal{D}\big(1-\phi(u_*),\theta/c\big), \qquad\text{ for all }v\ge u_*, |t|\le \theta.
\end{align*}
By Lemma 10 from~\cite{FW09} there is $\theta_1>0$ such that for all $0<\theta<\theta_1$ the function $b$
and so all functions $\psi_n$ are analytic on $\mathcal{D}(1-\phi(u_*),\theta/c)$. This implies, in particular, that $\frac{\partial^3}{\partial t^3}b(\phi(v-it))$
is bounded on the set $\{v\ge u_*, |t|\le \theta\}$ and that the family
$\frac{\partial^3}{\partial t^3}\psi_n(\phi(v-it))$ is uniformly bounded on the set $\{v\ge u_*, |t|\le \theta\}$,
where uniformity follows from the fact that the $\psi_n$ are analytic and converge uniformly to zero.
\medskip

Expanding in a Taylor series in $t$ and using the definition of $u_q$ and $\sigma_q^2$ we get
\begin{align}
\notag
b(\phi(u_q-it))&=b(\phi(u_q))-it(b\circ \phi)'(u_q) -\frac{t^2}{2}\frac{d^2}{d u^2}(b\circ \phi)(u_q)+O(t^3)\\
&=b(\phi(u_q))+\frac{ity}{q} -\frac{\sigma^2_qt^2}{2}+O(t^3),
\label{ts}
\end{align}
and
\begin{align}
\notag
\psi_n(\phi(u_q-it))
&=\psi_n(\phi(u_q))-it(\psi_n\circ \phi)'(u_q) -\frac{t^2}{2}\frac{d^2}{d u^2}(\psi_n\circ \phi)(u_q)
+O(t^3)\\
&=\psi_n(\phi(u_q))-ita_q -\frac{s_qt^2}{2}+O(t^3),
\label{ts2}
\end{align}
with
\begin{align*}
a_q(\e):=(\psi_n\circ\phi)'(u_q)\qquad\text{ and }\qquad s_q(\e):=\frac{d^2}{d u^2}(\psi_n\circ \phi)(u_q),
\end{align*}
as $t\to 0$, uniformly in $\e$ and $q$.
Observe that $s_q\to 0$ as $\e\downarrow 0$ uniformly in $q$  and so
$\sigma^2_q-s_q>0$ for all $\e>0$ small enough and all $q$.
We fix $\theta<\theta_1$ so that  for all $t\le \theta$ the functions $O$ in~\eqref{ts} and~\eqref{ts2} satisfy
$|O(t^3)|<{\sigma_q^2t^2}/{8}$. Let
\begin{align*}
\rho(\e):=\kappa\mu^{-\frac{\kappa}{2}}.
\end{align*}
For $\e$ small enough we split the integral in~\eqref{int} as
\begin{equation}\begin{aligned}
2\pi&\P\Big(\sum_{j=1}^{q\mu^{\kappa-n}}W_j<\e a^{\kappa-n}\Big)\\
&=\int_{-\rho}^{\rho}\frac{e^{y \mu^{\kappa}(u_q-it)}}{u_q-it} \, f^{q\mu^{\kappa-n}}_n(\phi(u_q-it))\, dt
+\int_{|t|\in [\rho,\theta]}\frac{e^{y \mu^{\kappa}(u_q-it)}}{u_q-it}\,f^{q\mu^{\kappa-n}}_n(\phi(u_q-it))\, dt\\
&\qquad +\int_{|t|\ge\theta}\frac{e^{y \mu^{\kappa}(u_q-it)}}{u_q-it}\,f^{q\mu^{\kappa-n}}_n(\phi(u_q-it))\, dt
-\int_{-\infty}^{\infty}\frac{1}{u_q-it} \, f^{q\mu^{\kappa-n}}_n(\phi(u_q-it))\, dt.
\label{dec}
\end{aligned}\end{equation}
The third and fourth integrals on the right hand side of~\eqref{dec} are negligible
by Lemmas~\ref{l_3} and~\ref{l_4} below, respectively. This is due to the fact that in the desired formula~\eqref{asymp}
$q\psi_n(\phi(u_q))\to 0$ uniformly in $q$, and $yu_q$ is positive and uniformly bounded away from zero.
We now show that the second integral is also
negligible, and that the first one has the required asymptotics.%
\medskip%


By definition of $\psi_n$, we have
\begin{align*}
f_n(z)=p_{\mu}^{-\frac{1}{\mu-1}}\exp\big\{\mu^n\big(b(z)-\psi_n(z)\big)\big\}
\end{align*}
and so
\begin{align}
\label{iter}
f^{q\mu^{\kappa-n}}_n(z)=p_{\mu}^{-\frac{q\mu^{\kappa-n}}{\mu-1}}\exp\big\{q\mu^{\kappa}\big(b(z)-\psi_n(z)\big)\big\}.
\end{align}
To estimate the second integral
on the right hand side of~\eqref{dec} we use~\eqref{ts} and~\eqref{ts2} with the uniform error bounds and get
\begin{align*}
\mathcal{R}e&\big[qb(\phi(u_q-it))-q\psi_n(\phi(u_q-it))+y(u_q-it)\big]\\
&\le qb(\phi(u_q))-q\psi_n(\phi(u_q))+yu_q-\frac{q\sigma_q^2t^2}{4}
\le qb(\phi(u_q))+yu_q-\frac{\sigma_q^2t^2}{4},
\end{align*}
for all $|t|\le \theta$, using that $\psi_n$ is positive. Hence
\begin{align*}
&\Big|\int_{|t|\in [\rho,\theta]}
\frac{e^{y \mu^{\kappa}(u_q-it)}}{u_q-it}f^{q\mu^{\kappa-n}}_n(\phi(u_q-it))\, dt\Big|\\
&=p_{\mu}^{-\frac{q\mu^{\kappa-n}}{\mu-1}}\Big|\int_{|t|\in [\rho,\theta]}
\frac{1}{u_q-it}\exp\big\{\mu^{\kappa}\big(qb(\phi(u_q-it))-q\psi_n(\phi(u_q-it))+y(u_q-it)\big)\big\}\, dt\Big|\\
&\le\frac{2\theta}{u_q}\, p_{\mu}^{-\frac{q\mu^{\kappa-n}}{\mu-1}}\exp\Big\{\mu^{\kappa}\Big(qb(\phi(u_q))+yu_q
-\frac{\sigma_q^2\rho^2}{4}\Big)\Big\}\\
&=o(1)\, p_{\mu}^{-\frac{q\mu^{\kappa-n}}{\mu-1}}\mu^{-\frac{\kappa}{2}}\exp\big\{\mu^{\kappa}\big(qb(\phi(u_q))+yu_q\big)\big\}
\end{align*}
uniformly in $q$ since
\begin{align*}
\exp\Big\{-\frac{\mu^\kappa\sigma_q^2\rho^2}{4}\Big\}=\exp\Big\{-\frac{\kappa^2\sigma_q^2}{4}\Big\}
=o(1)\,\mu^{-\frac{\kappa}{2}}.
\end{align*}
%
Now consider the first integral on the r.h.s. of~\eqref{dec}, which is the only one contributing to the asymptotics.
Using~\eqref{ts},~\eqref{ts2},~\eqref{iter}, and dropping the $O$ terms since $\mu^\kappa\rho^3\to 0$ we get
\begin{align*}
\int_{-\rho}^{\rho}
&\frac{e^{y \mu^{\kappa}(u_q-it)}}{u_q-it}
f^{q\mu^{\kappa-n}}_n(\phi(u_q-it))\, dt\\
&=p_{\mu}^{-\frac{q\mu^{\kappa-n}}{\mu-1}}\int_{-\rho}^{\rho}
\frac{1}{u_q-it}\exp\big\{\mu^{\kappa}\big(qb(\phi(u_q-it))-q\psi_n(\phi(u_q-it))+y(u_q-it)\big)\big\}\, dt\\
&=p_{\mu}^{-\frac{q\mu^{\kappa-n}}{\mu-1}}\frac{1+o(1)}{u_q}
\exp\big\{\mu^{\kappa}\big(qb(\phi(u_q))-q\psi_n(\phi(u_q))+yu_q\big)\big\}\\
&\qquad \times\int_{-\rho}^{\rho}
\exp\Big\{itq\mu^\kappa a_q-\frac{q\mu^{\kappa}(\sigma_q^2-s_q)t^2}{2}\Big\}\, dt.
\end{align*}
Using the substitution $\tau=t\mu^{\frac{\kappa}{2}}\sqrt{(\sigma_q^2-s_q)q}$, we obtain
\begin{align*}
\int_{-\rho}^{\rho}
&\frac{e^{y \mu^{\kappa}(u_q-it)}}{u_q-it}
f^{q\mu^{\kappa-n}}_n(\phi(u_q-it))\, dt\\
&=p_{\mu}^{-\frac{q\mu^{\kappa-n}}{\mu-1}}\frac{\sqrt{2\pi}}{\sigma_q u_q \sqrt{q}}
\mu^{-\frac{\kappa}{2}}\exp\big\{\mu^{\kappa}\big(qb(\phi(u_q))-q\psi_n(\phi(u_q))+yu_q\big)\big\}I_q,
\end{align*}
where
\begin{align*}
I_q(\e)=\frac{1+o(1)}{\sqrt{2\pi}}\int_{-\kappa\sqrt{(\sigma^2_q-s_q)q}}^{\kappa\sqrt{(\sigma^2_q-s_q)q}}
\exp\Big\{i\tau \mu^{\frac{\kappa}{2}}a_q\sqrt{\frac{q}{\sigma_q^2-s_q}}-\frac{\tau^2}{2}\Big\}\, d\tau.
\end{align*}
%
It is easy to see that the absolute value of the integral on the right hand side is bounded by $\sqrt{2\pi}$.
Since it is clearly nonnegative, we get the uniform bound $I_q\le 1+o(1)$.
If $\mu^{\kappa}\psi_n(\phi(u_1))=O(1)$ then
$I_1\to 1$ by Lemma~\ref{normal} with
\begin{align*}
\rho_1(\e) & :=\kappa\sqrt{\sigma^2_1-s_1}\to \infty\\
\rho_2(\e) & :=\mu^{\frac{\kappa}{2}}a_1\sqrt{\frac{1}{\sigma_1^2-s_1}}=
O(1)\,\mu^{\frac \kappa 2+n}\psi_n(\phi(u_1))\to 0,
\end{align*}
where the last line follows from Lemma~\ref{l_psi}.
\end{proof}
\medskip

\begin{lemma}
\label{l_4}
There is $c>0$ such that
\begin{align*}
\Big|\int_{-\infty}^{\infty}\frac{1}{u_q-it}\, f^{q\mu^{\kappa-n}}_n(\phi(u_q-it))\, dt\Big|\le cp_{\mu}^{-\frac{q\mu^{\kappa-n}}{\mu-1}}\exp\big\{\mu^{\kappa}qb(\phi(u_q))\big\},
\end{align*}
for any  $q\in [1,2]$ such that $q\mu^{\kappa-n}\in \N$ and any $\e>0$.
\end{lemma}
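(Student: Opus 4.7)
The plan is to prove Lemma~\ref{l_4} by contour deformation, exploiting the analyticity of the integrand on the right half-plane and the Poincar\'e functional equation $f(\phi(z))=\phi(az)$. Iterating the functional equation gives
\begin{align*}
f_n^{q\mu^{\kappa-n}}(\phi(s))=\phi^{q\mu^{\kappa-n}}(a^n s)
\end{align*}
for $\Re s>0$. Setting $N:=q\mu^{\kappa-n}$ and $G(s):=\phi^N(a^ns)/s$, we note that $G$ is analytic on the whole open right half-plane $\{\Re s>0\}$: the Laplace transform $\phi$ is analytic there since $W\geq0$, and $s\neq0$ in our region. The integral we wish to bound is, up to a factor of modulus one coming from the substitution $s=u_q-it$, the integral of $G$ along the vertical line $\Re s=u_q$.

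For each $R>0$, let $C_R$ be the closed contour consisting of the vertical segment from $u_q-iR$ to $u_q+iR$, followed by the right semicircle $S_R=\{u_q+Re^{i\theta}:\theta\in[-\pi/2,\pi/2]\}$ returning from $u_q+iR$ to $u_q-iR$. Since $C_R$ is contained in $\{\Re s\geq u_q\}$, where $G$ is analytic, Cauchy's theorem gives $\oint_{C_R}G(s)\,ds=0$, so
\begin{align*}
\Big|\int_{-R}^{R}\frac{f_n^{N}(\phi(u_q-it))}{u_q-it}\,dt\Big|=\Big|\int_{S_R}G(s)\,ds\Big|.
\end{align*}
On $S_R$ we have $\Re s=u_q+R\cos\theta\geq u_q$ and $|s|^2=u_q^2+2u_qR\cos\theta+R^2\geq R^2$, hence $|s|\geq R$. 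Using the elementary inequality $|\phi(z)|\leq\phi(\Re z)$ for $\Re z>0$ and the monotonicity of $\phi$ on the positive axis,
\begin{align*}
|\phi(a^ns)|\leq\phi(a^n\Re s)\leq\phi(a^nu_q),
\end{align*}
so $|G(s)|\leq\phi(a^nu_q)^{N}/R$ on $S_R$. Multiplying by the arc length $\pi R$ yields $|\int_{S_R}G(s)\,ds|\leq\pi\phi(a^nu_q)^{N}$, a bound which is uniform in $R$. Letting $R\to\infty$ (the symmetric partial integrals $\int_{-R}^R$ being uniformly bounded, consistent with the interpretation of $\int_{-\infty}^\infty$ used in \eqref{dec}) gives
\begin{align*}
\Big|\int_{-\infty}^{\infty}\frac{f_n^{N}(\phi(u_q-it))}{u_q-it}\,dt\Big|\leq\pi\,\phi(a^nu_q)^{N}.
\end{align*}

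To finish, apply Poincar\'e once more to write $\phi(a^nu_q)^{N}=f_n^{N}(\phi(u_q))$, and then use~\eqref{iter} with $z=\phi(u_q)$ to obtain
\begin{align*}
\phi(a^nu_q)^{N}=p_{\mu}^{-\frac{N}{\mu-1}}\exp\bigl\{q\mu^{\kappa}\bigl(b(\phi(u_q))-\psi_n(\phi(u_q))\bigr)\bigr\}.
\end{align*}
Since $\phi(u_q)\in(0,1)$ and $\psi_n>0$ on $(0,1)$ (as observed after~\eqref{psi}), dropping the nonpositive term $-q\mu^{\kappa}\psi_n(\phi(u_q))$ in the exponent only enlarges the bound, and we conclude with $c=\pi$. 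The only real subtlety lies in the passage $R\to\infty$: the semicircle estimate does \emph{not} shrink with $R$, so the conclusion is not an assertion about absolute integrability but about the uniform control of all symmetric truncations, which is exactly what is needed to use this bound in the decomposition~\eqref{dec}. No sharp information about the decay of $|\phi(a^n(u_q-it))|$ as $|t|\to\infty$ is required, which is the attractive feature of the argument.
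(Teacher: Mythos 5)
Your argument takes a genuinely different route from the paper's. The paper bounds the integrand pointwise: it uses that $z\mapsto f_n^{q\mu^{\kappa-n}}(z)/z$ is increasing on $(0,1)$ to extract a factor $|\phi(u_q-it)|$, whose integral over $\R$ is finite by Lemma~16 of FW09, and then invokes Lemma~13 of FW09 for the remaining estimate; this simultaneously establishes absolute convergence of the integral and the desired bound. Your approach instead deforms the contour out to a large semicircle in the right half-plane where $G(s)=\phi(a^ns)^N/s$ is analytic, uses $|s|\ge R$ and $|\phi(a^ns)|\le\phi(a^n\Re s)\le\phi(a^nu_q)$, and then converts $\phi(a^nu_q)^N=f_n^N(\phi(u_q))$ back via Poincar\'e and drops the positive $\psi_n$ term using~\eqref{iter}; this avoids any decay information about $|\phi|$ and gives the clean constant $c=\pi$. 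The one gap is the issue you flag yourself but somewhat understate: a uniform bound on the symmetric truncations $\int_{-R}^R$ does \emph{not} by itself imply that the improper integral exists, and the decomposition~\eqref{dec} is an identity between well-defined numbers, so the fourth integral must actually converge (the paper in effect proves it is absolutely convergent) before one may subtract it off. Once convergence is supplied from elsewhere — e.g.\ Lemma~16 of FW09 as in the paper, or Riemann--Lebesgue since $W$ has a density in the B\"ottcher case — your bound transfers to the integral and the proof is complete; without it the lemma as stated is not quite reached.
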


\begin{proof}
Observe that
$f_n^{q\mu^{\kappa-n}}(z)/z$ is a series with non-negative coefficients and so an increasing function on $(0,1)$.
Since $|\phi(u_q-it)|\le \phi(u_q)\le \phi(u_*)$ we have
\begin{align*}
\Big|\int_{-\infty}^{\infty}\frac{1}{u_q-it}\,f^{q\mu^{\kappa-n}}_n(\phi(u_q-it))\, dt\Big|
&\le \frac{1}{u_q}\int_{-\infty}^{\infty}f^{q\mu^{\kappa-n}}_n(|\phi(u_q-it)|)\,dt\\
&=\frac{1}{u_q}\int_{-\infty}^{\infty}\frac{f^{q\mu^{\kappa-n}}_n(|\phi(u_q-it)|)}{|\phi(u_q-it)|}|\phi(u_q-it)| \, dt\\
&\le \frac{f^{q\mu^{\kappa-n}}_n(\phi(u_q))}{u_*\phi(u_*)}\int_{-\infty}^{\infty}|\phi(u_q-it)| \, dt.
\end{align*}
The integral is uniformly bounded by Lemma 16 in~\cite{FW09}. Lemma 13 from the same paper implies the estimate
$$f^{q\mu^{\kappa-n}}_n(\phi(u_q))<p_{\mu}^{-\frac{q\mu^{\kappa-n}}{\mu-1}}\exp\big\{\mu^\kappa qb(\phi(u_q))\big\},$$
which completes  the proof.
\end{proof}
\medskip

\begin{lemma}
\label{l_3}
For any $\theta>0$ there are $\delta>0$ and
$c>0$ such that
\begin{align*}
\Big|\int_{|t|\ge \theta}\frac{e^{y \mu^{\kappa}(u_q-it)}}{u_q-it}f_n^{q\mu^{\kappa-n}}(\phi(u_q-it))\, dt\Big|
\le c p_{\mu}^{-\frac{q\mu^{\kappa-n}}{\mu-1}}\exp\big\{\mu^{\kappa}\big(qb(\phi(u_q))+yu_q-\delta\big)\big\}
\end{align*}
for any $q\in [1,2]$ such that $q\mu^{\kappa-n}\in \N$ and any $\e$.
\end{lemma}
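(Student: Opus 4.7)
The plan is to combine a uniform gap for $|\phi(u_q-it)|$ away from $t=0$ with the Poincar\'e functional equation $f_n\circ\phi=\phi(a^n\,\cdot)$. Using that $f_n$ has non-negative Taylor coefficients, so that $|f_n(z)|\le f_n(|z|)$ for $|z|\le 1$, I split off one power of the Laplace transform,
\begin{align*}
|f_n(\phi(u_q-it))|^{q\mu^{\kappa-n}}
&\le f_n(|\phi(u_q-it)|)^{q\mu^{\kappa-n}-1}\cdot|f_n(\phi(u_q-it))|\\
&=f_n(|\phi(u_q-it)|)^{q\mu^{\kappa-n}-1}\cdot|\phi(a^nu_q-ia^nt)|.
\end{align*}
The first factor will supply the exponential gain after applying the bound $f_n(s)\le p_{\mu}^{-1/(\mu-1)}\exp\{\mu^n b(s)\}$ from Lemma~13 of~\cite{FW09}, while the second factor, after the change of variables $s=a^nt$, will be handled via the uniform integrability of $|\phi|$ on vertical lines (Lemma~16 of~\cite{FW09}).

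The key step is a uniform gap estimate: for every $\theta>0$ there exists $\eta>0$ with
$$|\phi(u-it)|\le\phi(u)-\eta\qquad\text{for all }u\in[u_*,u^*]\text{ and }|t|\ge\theta.$$
On the compact annulus $[u_*,u^*]\times\{\theta\le|t|\le T\}$ this follows by continuity from the strict inequality $|\phi(u-it)|<\phi(u)$ for $t\ne0$, which holds because the martingale limit $W$ has a density (and hence non-arithmetic distribution). For $|t|>T$ with $T$ large, uniform smallness of $|\phi(u-it)|$ follows from the Riemann--Lebesgue lemma applied to $e^{-uw}g(w)\mathbf{1}_{\{w>0\}}\in L^1$, where $g$ is the density of~$W$; the estimate is uniform in $u\in[u_*,u^*]$ by equicontinuity in $u$. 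Combined with the strict monotonicity and continuity of $b$ on~$(0,1)$, this produces a $\delta_0>0$ such that $b(|\phi(u_q-it)|)\le b(\phi(u_q))-\delta_0$ uniformly in $|t|\ge\theta$ and $u_q\in[u_*,u^*]$.

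Substituting the Lemma~13 bound into the factorization, bounding $|u_q-it|^{-1}\le u_*^{-1}$ and $|e^{y\mu^\kappa(u_q-it)}|=e^{y\mu^\kappa u_q}$, and invoking Lemma~16 of~\cite{FW09} after $s=a^nt$ to bound the $|\phi|$-integral by $C_2/a^n$, the absolute value of the integral is controlled by
$$
\frac{C_3}{a^n}\,p_{\mu}^{-q\mu^{\kappa-n}/(\mu-1)}\exp\bigl\{\mu^\kappa\bigl(qb(\phi(u_q))+yu_q-q\delta_0\bigr)+\mu^n\bigl(|b(\phi(u_q))|+\delta_0\bigr)\bigr\}.
$$
Since $\mu^{n-\kappa}=\mu^{-\lceil\gamma(\e)\rceil-d}\to 0$ as $\e\downarrow 0$ and $|b(\phi(u_q))|$ is bounded on the compact range of $u_q$, the correction $\mu^n(|b(\phi(u_q))|+\delta_0)$ is $o(\mu^\kappa)$ and can be absorbed into $\mu^\kappa\delta_0/2$ for $\e$ small, while the prefactor $a^{-n}$ is harmless. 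Using $q\ge1$, the claim then follows with $\delta=\delta_0/2$.

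I expect the uniform gap to be the main obstacle: it requires both the non-arithmetic/density property of~$W$ (to rule out $|\phi(u-it)|=\phi(u)$ for $t\ne 0$) and a Riemann--Lebesgue-type decay that is uniform across the compact range of~$u_q$. The remaining exponent bookkeeping is essentially routine once one observes that the main exponential scale $\mu^\kappa$ strictly dominates the ``error scale''~$\mu^n$ as $\e\downarrow 0$.
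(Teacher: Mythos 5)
Your factorization is correct and does prove the lemma, but it genuinely departs from the paper's route, and there is one step you should patch. The paper uses the monotonicity of $z\mapsto f_n^{q\mu^{\kappa-n}}(z)/z$ on $(0,1)$ (a power series with nonnegative coefficients) to write
$|f_n^{q\mu^{\kappa-n}}(\phi(u_q-it))|\le\frac{f_n^{q\mu^{\kappa-n}}((1-\eta)\phi(u_q))}{(1-\eta)\phi(u_q)}\,|\phi(u_q-it)|$,
so that the factor kept inside the integral is $|\phi(u_q-it)|$ with $u_q$ in the compact range $[u_*,u^*]$, which is exactly the range over which Lemma~16 of~\cite{FW09} gives uniform integrability; the exponential gap then comes from $b(\phi(u_q))-b((1-\eta)\phi(u_q))\ge\eta\phi(u_q)\ge\eta\phi(u^*)$ via $b'\ge 1$ (Lemma~14). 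Your version instead peels off a single copy of $f_n$, converts it to $\phi(a^n(u_q-it))$ via the Poincar\'e equation, and applies Lemma~13 to the remaining $q\mu^{\kappa-n}-1$ copies. The exponent bookkeeping (including absorbing the $\mu^n(|b(\phi(u_q))|+\delta_0)$ correction using $\mu^{n-\kappa}\to 0$ and $q\ge 1$) is fine, and the uniform gap via the Cram\'er transform, compactness of $[u_*,u^*]$ and Riemann--Lebesgue is the same mechanism the paper invokes.

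The step that needs care is the claim that, after $s=a^n t$, the remaining $|\phi|$-integral is $\le C_2/a^n$ by Lemma~16 of~\cite{FW09}. That lemma controls $\int|\phi(v-it)|\,dt$ uniformly for $v$ in a \emph{bounded} range, whereas your substitution puts the argument at $v=a^n u_q\to\infty$; as stated, you are applying a uniform bound outside the domain where it is established, and in fact the resulting $1/a^n$ prefactor is spurious (the correct uniform bound for $\int|\phi(a^n u_q-is)|\,ds$ is $O(a^n)$, not $O(1)$). The easy fix is to avoid the change of variables entirely: since $Z_n\ge 1$ one has $f_n(s)\le s$ on $[0,1]$, hence
$|\phi(a^n(u_q-it))|=|f_n(\phi(u_q-it))|\le f_n(|\phi(u_q-it)|)\le|\phi(u_q-it)|$,
and now Lemma~16 applies with $u_q\in[u_*,u^*]$ exactly as intended, yielding a uniform $O(1)$ bound which is all you need. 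With this correction your argument closes; the paper's $f_n^{q\mu^{\kappa-n}}(z)/z$ trick buys a slightly cleaner path because it keeps the spectator factor as $|\phi(u_q-it)|$ from the outset, never leaving the compact range, and parallels the proof of Lemma~\ref{l_4}.
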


\begin{proof} Following the proof of Lemma 16 in~\cite{FW09}, we use the fact that, for each $w\in [u_*,u^*]$,
$t\mapsto \phi(w-it)/\phi(w)$ is the characteristic function of some absolutely continuous law
(Cram\'er transform), the continuity of the mapping $(w,t)\mapsto \phi(w-it)/\phi(w)$, and the compactness of $[u_*,u^*]$
to conclude that there is a constant $\eta$ such that
\begin{align*}
|\phi(u_q-it)|<(1-\eta)\phi(u_q)\qquad \text{ for all } |t|>\theta.
\end{align*}
Using the monotonicity of $z\mapsto f_n^{q\mu^{\kappa-n}}(z)/z$ on~$(0,1)$, we get
\begin{align*}
|f^{q\mu^{\kappa-n}}_n(\phi(u_q-it))|
&\le f^{q\mu^{\kappa-n}}_n(|\phi(u_q-it)|)\\
&=\frac{f^{q\mu^{\kappa-n}}_n(|\phi(u_q-it)|)}{|\phi(u_q-it)|}|\phi(u_q-it)|
\le \frac{f^{q\mu^{\kappa-n}}_n((1-\eta)\phi(u_q))}{(1-\eta)\phi(u_q)}|\phi(u_q-it)|.
\end{align*}
Using Lemmas 13 and 16 from~\cite{FW09} we obtain, for some $c>0$,
\begin{align*}
\Big|\int_{|t|\ge \theta}\frac{e^{y \mu^{\kappa}(u_q-it)}}{u_q-it}f_n^{q\mu^{\kappa-n}}(\phi(u_q-it))\, dt\Big|
&\le e^{u_qy\mu^{\kappa}}
\frac{f^{q\mu^{\kappa-n}}_n((1-\eta)\phi(u_q))}{(1-\eta)u_*\phi(u_*)}\int_{-\infty}^\infty|\phi(u_q-it)|\, dt\\
&\le c p_{\mu}^{-\frac{q\mu^{\kappa-n}}{\mu-1}}\exp\big\{\mu^{\kappa}\big(qb((1-\eta)\phi(u_q))+u_qy\big)\big\}.\\
\end{align*}
By Lemma 14 in~\cite{FW09} we have $b'(s)\ge 1/s> 1$ on $(0,1)$. Hence
$b(\phi(u_q))-b((1-\eta)\phi(u_q))\ge \eta\phi(u_q)\ge \eta\phi(u^*).$
Picking $\delta=\eta\phi(u_2)$ we obtain the desired estimate since $\phi(u_q)\ge \phi(u_2)$.
\end{proof}
\medskip

\begin{lemma} If $\rho_1\to \infty$ and $\rho_2\to 0$ then
\label{normal}
\begin{align*}
\int_{-\rho_1}^{\rho_1}\exp\Big\{i\tau\rho_2-\frac{\tau^2}{2}\Big\}\, d\tau =\sqrt{2\pi}+o(1).
\end{align*}
\end{lemma}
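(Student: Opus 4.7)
The plan is to compare the integral to the complete Gaussian integral $\int_{-\infty}^\infty e^{-\tau^2/2}\,d\tau = \sqrt{2\pi}$ and show that both the truncation at $\pm\rho_1$ and the phase factor $e^{i\tau\rho_2}$ contribute only $o(1)$ errors. The cleanest route is to split the integrand as
\begin{align*}
\exp\Big\{i\tau\rho_2-\tfrac{\tau^2}{2}\Big\}=e^{-\tau^2/2}+e^{-\tau^2/2}\bigl(e^{i\tau\rho_2}-1\bigr),
\end{align*}
and to estimate the two pieces separately.

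For the first piece, since $\rho_1\to\infty$, the standard Gaussian tail bound gives
\begin{align*}
\int_{-\rho_1}^{\rho_1}e^{-\tau^2/2}\,d\tau=\sqrt{2\pi}-2\int_{\rho_1}^{\infty}e^{-\tau^2/2}\,d\tau=\sqrt{2\pi}+o(1),
\end{align*}
using for instance $\int_{\rho_1}^\infty e^{-\tau^2/2}d\tau\le \rho_1^{-1}e^{-\rho_1^2/2}$.

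For the second piece, I would use the elementary inequality $|e^{ix}-1|\le |x|$ for $x\in\R$, which yields $|e^{i\tau\rho_2}-1|\le \rho_2|\tau|$. Hence
\begin{align*}
\Bigl|\int_{-\rho_1}^{\rho_1}e^{-\tau^2/2}\bigl(e^{i\tau\rho_2}-1\bigr)\,d\tau\Bigr|
\le \rho_2\int_{-\infty}^{\infty}|\tau|\,e^{-\tau^2/2}\,d\tau
=2\rho_2=o(1),
\end{align*}
since $\rho_2\to 0$. Combining the two bounds gives the claim.

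There is essentially no serious obstacle here; the only thing to be careful about is that the two error mechanisms (tail truncation and phase perturbation) are controlled independently, rather than attempting to complete the square and shift the contour, which would force us to track the boundary terms at $\tau=\pm\rho_1\pm i\rho_2$ and to verify holomorphicity on a strip---all correct but unnecessarily heavy for this statement.
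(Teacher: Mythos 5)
Your proof is correct but takes a genuinely different route from the paper's. The paper completes the square, writing the integral as $e^{-\rho_2^2/2}\int_{-\rho_1}^{\rho_1}\exp\{-(\tau-i\rho_2)^2/2\}\,d\tau$, and then shifts the contour down by $i\rho_2$ using Cauchy's theorem (since $e^{-z^2/2}$ is entire), with the two vertical segments of length $\rho_2\to0$ contributing $o(1)$ because $|e^{-z^2/2}|\le e^{-(\rho_1^2-\rho_2^2)/2}\to0$ there. You instead split the integrand as $e^{-\tau^2/2}+e^{-\tau^2/2}(e^{i\tau\rho_2}-1)$ and control the second piece by the elementary bound $|e^{ix}-1|\le|x|$, which gives a contribution at most $2\rho_2=o(1)$; the first piece is the truncated Gaussian, handled by the standard tail estimate. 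Your argument is entirely real-variable and avoids any holomorphy considerations, which makes it somewhat lighter for this particular statement; the paper's contour-shift approach is the more systematic device (it is in fact the same machinery used heavily elsewhere in the paper for the saddle-point analysis of Proposition \ref{p_main}), so the authors likely favored it for uniformity of method rather than necessity. Both are complete and correct.
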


\begin{proof}
We have
\begin{align*}
\int_{-\rho_1}^{\rho_1}\exp\Big\{i\tau\rho_2-\frac{\tau^2}{2}\Big\}\, d\tau
=e^{-\rho_2^2/2}\int_{-\rho_1}^{\rho_1}\exp\Big\{-\frac{(\tau-i\rho_2)^2}{2}\Big\}\, d\tau.
\end{align*}
Denote by $\Gamma^1(\e)$ the straight path in $\C$ going from $-\rho_1-i\rho_2$ to
$-\rho_1$ and by $\Gamma^2(\e)$ the straight path in $\C$ going from $\rho_1$ to
$\rho_1-i\rho_2$.
Since $z\mapsto \exp\{-z^2/2\}$ is an entire function we have
\begin{align*}
\int_{-\rho_1}^{\rho_1}\exp\Big\{-\frac{(\tau-i\rho_2)^2}{2}\Big\}\, d\tau
=\int_{-\rho_1}^{\rho_1}e^{-\tau^2/2}\, d\tau
+\int_{\Gamma^1\cup \Gamma^2}\!\!\!\! e^{-z^2/2}dz.
\end{align*}

Obviously, the first integral converges to $\sqrt{2\pi}$. The second integral tends to zero since
the length of $\Gamma^1\cup \Gamma^2$ goes to zero and $|e^{-z^2/2}|\le e^{-(\rho_1^2-\rho_2^2)/2}\to 0$ on
$\Gamma^1\cup \Gamma^2$.
\end{proof}
\medskip


\section{Time of the first non-minimal branching}

In this section we prove Theorems~\ref{t_main} and~\ref{t_main2}. The key idea, just as in the case~$\mu=1$, is to
combine a decomposition of the population according to their ancestry in a suitably chosen generation with the tail
estimate for sums of independent copies of~$W$.

\begin{lemma}
\label{01}
Fix $d \in \{-1, 0,1\}$.
\smallskip

(a) If $\e_j\downarrow 0$ such that $\mu^{\kappa(\e_j)}\psi_{n(\e_j)}(\phi(u_1(\e_j)))\to 0$, then
\begin{align*}
\P\big(\, {K}>\kappa(\e_j)-n(\e_j)\, \big| \, W<\e_j\big)\to 1.
\end{align*}
\smallskip

(b) If $\e_j\downarrow  0$ such that
$\mu^{\kappa(\e_j)}\psi_{n(\e_j)}(\phi(u_1(\e_j))) \asymp1$, then
\begin{align*}
0 \prec \P\big(\, {K}>\kappa(\eps_j)-n(\eps_j)\, \big| \, W<\e_j\big)\prec 1.
\end{align*}
\smallskip

(c) If $\e_j\downarrow  0$ such that $\mu^{\kappa(\eps_j)}\psi_{n(\e_j)}(\phi(u_1(\eps_j)))\to \infty$, then
\begin{align*}
\P\big(\, {K}>\kappa(\eps_j)-n(\eps_j)\, \big| \, W<\e_j\big)\to 0.
\end{align*}
\end{lemma}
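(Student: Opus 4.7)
The plan is to reduce $\P(K > \kappa - n \mid W < \e)$ to an explicit ratio and then extract its asymptotics from Lemma~\ref{eps} and Proposition~\ref{p_main}. Writing $m := \kappa(\e) - n(\e)$, I first observe that $\{K > m\}$ is precisely the event that every individual in generations $0, 1, \ldots, m-1$ has exactly $\mu$ offspring, i.e.\ that the tree is $\mu$-ary up to generation $m$. The number of such individuals is $1 + \mu + \cdots + \mu^{m-1} = (\mu^m - 1)/(\mu - 1)$, so this has unconditional probability $p_\mu^{(\mu^m - 1)/(\mu-1)}$. On this event the $\mu^m$ descendants in generation $m$ start independent Galton--Watson trees, yielding the same decomposition used in Section~\ref{S:proofmu=1}, namely $W = a^{-m} \sum_{j=1}^{\mu^m} W_j$ with $(W_j)$ independent copies of $W$ that are independent of the branching structure up to generation $m$. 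Therefore
$$\P(K > m,\, W < \e) = p_\mu^{(\mu^m - 1)/(\mu-1)} \cdot \P\Big(\sum_{j=1}^{\mu^m} W_j < \e a^m\Big).$$

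Next I would apply Proposition~\ref{p_main} with $q = 1$ to the right-hand factor and Lemma~\ref{eps} to the denominator $\P(W < \e)$, and divide. The $p_\mu$-exponents combine as $(\mu^m - 1)/(\mu-1) - \mu^m/(\mu-1) + 1/(\mu-1) = 0$; the Gaussian prefactors $\mu^{-\kappa/2}/(\sigma_1 u_1 \sqrt{2\pi})$ match identically; and the terms $\mu^\kappa(b(\phi(u_1)) + y u_1)$ in the exponentials also cancel, leaving only the extra $-\mu^\kappa \psi_n(\phi(u_1))$ contributed by Proposition~\ref{p_main}. The outcome I expect is
$$\P\bigl(K > \kappa - n \,\big|\, W < \e\bigr) = \exp\bigl\{-\mu^\kappa \psi_n(\phi(u_1))\bigr\} \cdot I_1(\e) \cdot (1 + o(1)).$$

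The three cases now read off immediately from the two properties of $I_q(\e)$ in Proposition~\ref{p_main}. In case~(a), $\mu^\kappa \psi_n(\phi(u_1)) \to 0$, so the exponential factor tends to $1$ and, the hypothesis $\mu^\kappa \psi_n(\phi(u_1)) = O(1)$ being trivially satisfied, the second bullet of Proposition~\ref{p_main} gives $I_1(\e) \to 1$; hence the conditional probability tends to $1$. In case~(b), $\mu^\kappa \psi_n(\phi(u_1)) \asymp 1$, so the exponential factor is bounded away from $0$ and $1$ while $I_1(\e) \to 1$ by the same bullet, placing the conditional probability strictly between $0$ and $1$ asymptotically. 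In case~(c), $\mu^\kappa \psi_n(\phi(u_1)) \to \infty$, the exponential factor vanishes, and the uniform boundedness of $I_1(\e)$ furnished by the first bullet of Proposition~\ref{p_main} forces the conditional probability to $0$.

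The main obstacle is purely bookkeeping: tracking each cancellation in the ratio carefully and confirming that the two $o(1)$ errors (one from Lemma~\ref{eps}, one from Proposition~\ref{p_main}) merge into a single harmless $(1 + o(1))$ factor that does not interact with the three regimes. All the genuinely difficult analytic input, namely the fine asymptotics of $\P(W < \e)$ and of $\P\bigl(\sum_{j=1}^{\mu^{\kappa-n}} W_j < \e a^{\kappa - n}\bigr)$, has already been supplied by the preceding sections.
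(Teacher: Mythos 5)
Your proposal is correct and follows essentially the same route as the paper: you identify $\{K>\kappa-n\}$ with $\{Z_{\kappa-n}=\mu^{\kappa-n}\}$, factor out the $\mu$-ary tree probability $p_\mu^{(\mu^{\kappa-n}-1)/(\mu-1)}$, apply Proposition~\ref{p_main} with $q=1$ and Lemma~\ref{eps}, observe the cancellations, and read off the three regimes from the behaviour of $\exp\{-\mu^\kappa\psi_n(\phi(u_1))\}$ and $I_1(\e)$. The only cosmetic difference is that the paper records the conclusion as $\exp\{-\mu^\kappa\psi_n(\phi(u_1))+o(1)\}I_1(\e)$ rather than carrying a separate $(1+o(1))$ factor, which is the same thing.
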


(Recall that $0\prec q_j \prec 1$ means that the sequence $q_j$ is uniformly bounded away from 0 and~1).

\begin{proof} Decomposing the tree according to ancestry in generation $\kappa-n$, we get
\begin{align*}
\P\big({K}> & \,\kappa(\eps)-n(\eps), W<\e\big) \\ & =\P(Z_{\kappa-n}=\mu^{\kappa-n}, W<\e)
 =\P\Big(Z_{\kappa-n}=\mu^{\kappa-n}, \sum_{i=1}^{\mu^{\kappa-n}}W_i<\e a^{\kappa-n}\Big)\\
&=\P(Z_{\kappa-n}=\mu^{\kappa-n})\P\Big(\sum_{i=1}^{\mu^{\kappa-n}}W_i<\e a^{\kappa-n}\Big)
=p_{\mu}^{\frac{\mu^{\kappa-n}-1}{\mu-1}}\P\Big(\sum_{i=1}^{\mu^{\kappa-n}}W_i<\e a^{\kappa-n}\Big).
\end{align*}
Hence, combining Proposition~\ref{p_main} with $q=1$ and Lemma~\ref{eps}, we obtain
\begin{align*}
\P\big(\, {K}>\kappa(\eps)-n(\eps)\, \big| \, W<\e\big)
&=p_{\mu}^{\frac{\mu^{\kappa-n}-1}{\mu-1}}\P\Big(\sum_{i=1}^{\mu^{\kappa-n}}W_i<\e a^{\kappa-n}\Big)\P(W<\e)^{-1}\\
&=\exp\big\{-\mu^{\kappa(\eps)}\psi_{n(\eps)}(\phi(u_1(\eps)))+o(1)\big\}\,I_1(\eps).
\end{align*}
In case~(a) and (b) we have $I_1(\eps_j)\to 1$ and the result follows. In case~(c) we use
that  $I_1(\eps_j)$ is bounded and therefore $\P\big(\, {K}>\kappa(\eps_j)-n(\eps_j)\ | \ W<\e_j\big)\to 0$.
\end{proof}
\medskip

It remains to analyse the expression  $\mu^{\kappa}\psi_n(\phi(u_1))$ for different sequences $\eps_n\downarrow 0$. We
prepare this by collecting three auxiliary facts.

\begin{lemma}
\label{l_test}
As $\e\downarrow 0$ we have\\[-2mm]
\begin{itemize}
\item[(a)] $\displaystyle
\mu^{\kappa-n}\asymp  \frac{\e^{-\alpha}}{\log (1/\e)},$\\[-2mm]
\item[(b)] $\displaystyle\exp\big\{(\lambda-\mu)\mu^nb(\phi(u_1))\big\}=\e^{\alpha\mu^{-\{\gamma\}-d}},$\\[-2mm]
\item[(c)] $\displaystyle
\mu^{\kappa}\psi_n(\phi(u_1)) \asymp\frac{\e^{\alpha(\mu^{-\{\gamma\}-d}-1)}}{\log (1/\e)}.$
\end{itemize}
\end{lemma}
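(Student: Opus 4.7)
The plan is to unwind the definitions, using the key identity $\mu=(a/\mu)^{\alpha}$ (equivalent to $\alpha=\log\mu/\log(a/\mu)$) and the defining relation for $H(\e)$, and then to invoke Lemma~\ref{l_psi}. The function $H$ was chosen precisely so that the book-keeping in part (b) collapses.

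\textbf{Part (a).} Since $n=\kappa-\lceil\gamma\rceil-d$, we have $\mu^{\kappa-n}=\mu^{\lceil\gamma\rceil+d}$. Writing $\lceil\gamma\rceil=\gamma+\{\gamma\}$ and substituting the definition of $\gamma(\e)$, the three terms contribute
\begin{align*}
\mu^{\log(1/\e)/\log(a/\mu)}&=\e^{-\log\mu/\log(a/\mu)}=\e^{-\alpha},\\
\mu^{-\log\log(1/\e)/\log\mu}&=\tfrac{1}{\log(1/\e)},\\
\mu^{H(\e)+\{\gamma\}+d}&\asymp 1,
\end{align*}
the last because $H$ is continuous and periodic, $\{\gamma\}\in[0,1)$, and $d\in\{-1,0,1\}$. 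Multiplying gives $\mu^{\kappa-n}\asymp \e^{-\alpha}/\log(1/\e)$.

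\textbf{Part (b).} I would first rewrite $\mu^n$ in closed form. From $y=\e(a/\mu)^\kappa$ and $(a/\mu)^\alpha=\mu$ one obtains $\mu^\kappa=(y/\e)^\alpha=y^\alpha\e^{-\alpha}$, so using part (a) (or rather its proof),
\begin{align*}
\mu^n=\frac{\mu^\kappa}{\mu^{\lceil\gamma\rceil+d}}=\frac{y^\alpha\log(1/\e)}{\mu^{H(\e)}\mu^{\{\gamma\}+d}}.
\end{align*}
The definition of $H$ can be rearranged as $b(\phi(u_1))(\lambda-\mu)y^\alpha=-\alpha\mu^{H(\e)}$. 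Multiplying $\mu^n$ by $(\lambda-\mu)b(\phi(u_1))$ and substituting this identity makes the factors $\mu^{H(\e)}$ and $y^\alpha$ cancel, leaving
\begin{align*}
(\lambda-\mu)\mu^n b(\phi(u_1))=-\alpha\mu^{-\{\gamma\}-d}\log(1/\e),
\end{align*}
which upon exponentiation is precisely the claim.

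\textbf{Part (c).} Apply Lemma~\ref{l_psi} at $z=\phi(u_1)$, which is a compact point in $\mathcal{D}(\d,\theta)$: the asymptotic gives
\begin{align*}
\mu^\kappa\psi_n(\phi(u_1))=p_\lambda p_\mu^{-\frac{\lambda-1}{\mu-1}}\,\mu^{\kappa-n-1}\exp\{(\lambda-\mu)\mu^n b(\phi(u_1))\}\,(1+o(1)).
\end{align*}
Substituting part (a) into the first factor and part (b) into the second yields
\begin{align*}
\mu^\kappa\psi_n(\phi(u_1))\asymp \frac{\e^{-\alpha}}{\log(1/\e)}\,\e^{\alpha\mu^{-\{\gamma\}-d}}=\frac{\e^{\alpha(\mu^{-\{\gamma\}-d}-1)}}{\log(1/\e)},
\end{align*}
as required.

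\textbf{Expected difficulty.} Parts (a) and (c) are bookkeeping plus invocation of Lemma~\ref{l_psi}. The only genuinely delicate step is (b): it requires that the additive correction $H(\e)$ in $\gamma$ and the $y^\alpha$ factor coming from $\mu^\kappa=y^\alpha\e^{-\alpha}$ combine cleanly with $b(\phi(u_1))(\lambda-\mu)$. One has to keep track that $y$, $u_1$, and $H$ are all multiplicatively periodic in $\e$ with the same period $a/\mu$, so that no spurious oscillatory factor survives the cancellation enforced by the definition of $H$.
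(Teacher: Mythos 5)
Your proof is correct and follows essentially the same route as the paper: unwind $\mu^{\kappa-n}$ via the definitions of $\gamma$, $\kappa$, $y$, and the identity $\mu^\kappa=(y/\e)^\alpha$ for part (a); use the defining property $\mu^{H}=-b(\phi(u_1))\,y^\alpha(\lambda-\mu)/\alpha$ to make the oscillatory factors cancel in part (b); and feed (a) and (b) into Lemma~\ref{l_psi}, evaluated at $\phi(u_1)$ (which stays in a fixed compact subset of $\mathcal D(\delta,\theta)$ because $u_1\in[u_*,u^*]$), for part (c).
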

(Recall that $\{\gamma\} = \lceil \gamma \rceil - \gamma$.)

\begin{proof}
Observe that it follows from $a=\mu^{1/\beta}$ and the definition of $y$ that
$\mu^\kappa=(y/\e)^{\alpha}$.
By definition of $\gamma$ and~$n$ we have
$\mu^{n-\kappa} \asymp \mu^{-\gamma}=\e^{\alpha}\log (1/\e)\,\mu^{-H}$,
which implies~(a). By the definition of $H$, see (\ref{Hdef}), we have
$\mu^H=-b(\phi(u_1))y^{\alpha}\alpha^{-1}(\lambda-\mu)$.
Combining these facts we obtain
\begin{align}
(\lambda-\mu)\mu^n b(\phi(u_{1}))
&=(\lambda-\mu)\mu^{\kappa-\gamma-\{\gamma\}-d}b(\phi(u_1))\notag\\
&=(\lambda-\mu)\log(1/\e)y^{\alpha}\mu^{-H-\{\gamma\}-d}b(\phi(u_1))
=\alpha\mu^{-\{\gamma\}-d}\log\e,
\label{ss4}
\end{align}
which proves~(b). By Lemma~\ref{l_psi}, part~(a) and~\eqref{ss4} we have
\begin{align*}
\mu^{\kappa}\psi_n(\phi(u_1))
&\asymp\mu^{\kappa-n-1}\exp\big\{(\lambda-\mu)\mu^nb(\phi(u_1))\big\}
\asymp\frac{\e^{\alpha(\mu^{-\{\gamma\}-d}-1)}}{\log (1/\e)},
\end{align*}
proving~(c).
\end{proof}
\bigskip

\begin{proof}[Proof of Theorem~\ref{t_main}]
Let $d=-1$, so that $\kappa-n=\lceil\gamma\rceil -1$.
By Lemma~\ref{l_test}\,(c), $\mu^{\kappa}\psi_n(\phi(u_1))\to 0$ since $\mu^{-\{\gamma\}+1}-1\ge 0$.
Hence Lemma~\ref{01} implies
\begin{align}
\label{tt1}
\P\big(\, {K}>\lceil\gamma\rceil -1\ | \ W<\e\big)
=\P\big(\, {K}>\kappa-n\ | \ W<\e\big)
\to 1\qquad \text{ as }\e\to 0.
\end{align}
Now let $d=1$, so that $\kappa-n=\lceil\gamma\rceil +1$.
Again, by Lemma~\ref{l_test}\,(c), $\mu^{\kappa}\psi_n(\phi(u_1))\to \infty$ as now
$\mu^{-\{\gamma\}-1}-1<0$. Hence Lemma~\ref{01} implies
\begin{align}
\label{tt2}
\P\big(\, {K}>\lceil\gamma\rceil +1\ | \ W<\e\big)
=\P\big(\, {K}>\kappa-n\ | \ W<\e\big)
\to 0\qquad \text{ as }\e\to 0.
\end{align}
The statement now follows from~\eqref{tt1} and~\eqref{tt2}.
\end{proof}
\medskip

\begin{proof}[Proof of Theorem~\ref{t_main2}]
Let $d=0$.
Then $\kappa-n=\lceil\gamma\rceil $.
By Lemma~\ref{l_test}\,(c) we have $$\mu^{\kappa}\psi_n(\phi(u_1))\asymp \omega.$$%
%
In case~(a) of Theorem \ref{t_main2}, we have $\omega(\e_j)\to \infty$ and so $\mu^{\kappa}\psi_n(\phi(u_1))\to \infty$
by Lemma~\ref{l_test}. Hence Lemma~\ref{01} implies
$\P({K}>\lceil\gamma(\eps_j)\rceil\ | \ W<\e_j)
=\P({K}>\kappa(\eps_j)-n(\eps_j)\, | \, W<\e_j)\to 0.$
Together with Theorem~\ref{t_main} we get
$\P\big(\, {K}=\lceil\gamma(\eps_j)\rceil\ | \ W<\e_j\big)\to 1$.
\medskip

In case~(b) we have $\omega(\e_j)\asymp 1$ and so $\mu^{\kappa}\psi_n(\phi(u_1))\asymp 1$ by Lemma~\ref{l_test}.
Hence Lemma~\ref{01} implies that
$\P(K>\lceil\gamma(\eps_j)\rceil\,| \, W<\e_j) =\P(K>\kappa(\eps_j)-n(\eps_j)\, | \, W<\e_j)$
is asymptotically equivalent to $\exp\{-\mu^{\kappa}\psi_{n}(\phi(u_1))\}.$
Together with Theorem~\ref{t_main} we infer that
$0 \prec \P( K=\lceil\gamma(\eps_j)\rceil\, | \, W<\e_j)\prec 1$ and
$0\prec \P( K=\lceil\gamma(\eps_j)\rceil+1\, | \, W<\e_j)\prec 1$, as required.
\medskip

In case~(c)  $\omega(\e_j)\to 0$ and so $\mu^{\kappa}\psi_n(\phi(u_1))\to 0$
by Lemma~\ref{l_test}. Hence Lemma~\ref{01} implies
$\P(K>\lceil\gamma(\eps_j)\rceil\, | \, W<\e_j)
=\P(K>\kappa(\eps_j)-n(\eps_j)\, | \, W<\e_j)\to 1.$
Together with Theorem~\ref{t_main} this implies the statement.
\end{proof}


\section{Extra offspring in the critical generation}

In this section we prove Theorem~\ref{t_prop}. Denote $\J:=\{j\ge \lambda: p_j\neq 0\}$ and
\begin{align*}
\M:=\big\{ (m_j)_{j\in \J}: m_j\in \N\cup\{0\} \text{ for all }j\in \J\big\}.
\end{align*}
For each $m\in \M$, denote
\begin{align*}
|m|:=\sum_{j\in \J}m_j \qquad\text{and}\qquad \langle m\rangle:=\sum_{j\in \J}(j-\mu)m_j\in \N\cup \{0,\infty\}.
\end{align*}
For each $j\in \J$, denote by $M_j$ the number of individuals in generation $K-1$ having precisely~$j$
children and let $M:=(M_j)_{j\in \J}$. The strategy of the proof is as follows. We first show that
$Z_{K}=\mu^{K}+\langle M\rangle$, see (\ref{Zmudiff}).
We then prove that, conditioned on the event $ W <\e$, the random variable 
$\langle M\rangle$ is, with high probability, in a certain interval,
see (\ref{boundsMph}).
Not surprisingly, in order to show (\ref{boundsMph}), we have to give the asymptotic behaviour of
$\P(M=m, K=\kappa-n \, |\,  W <\e)$, see (\ref{terms}), resulting in (\ref{ttt}), which has to be
optimized over $m$.\\

For each $t>0$, denote
\begin{align*}
\M_t:=\big\{m\in\M: \langle m \rangle <t\big\}.
\end{align*}

\begin{lemma}
\label{card}
The cardinality of $\M_t$ satisfies $|\M_t|=e^{o(t)}$ as $t\to\infty$.
\end{lemma}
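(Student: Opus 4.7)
The plan is to reduce the claim to a standard estimate on the integer partition function. Observe first that every $m\in\M_t$ has only finitely many nonzero coordinates, since $j-\mu\ge\lambda-\mu\ge 1$ for all $j\in\J$, so the constraint $\langle m\rangle<t$ forces $m_j=0$ whenever $j-\mu\ge t$ and bounds the remaining coordinates. Let $\mathcal K:=\{j-\mu:j\in\J\}\subseteq\N$ (all elements are distinct positive integers). Then $m\mapsto$ (the multiset containing $m_j$ copies of $j-\mu$ for each $j\in\J$) is an injection from $\M_t$ into the set of integer partitions of integers $k<t$, so
\[
|\M_t|\;\le\;\sum_{k=0}^{t-1} p(k),
\]
where $p(k)$ is the usual partition function.

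Next, I would invoke the bound $\log p(k)=O(\sqrt{k})$, which can be proved in an elementary way using generating functions: for any $s\in(0,1)$,
\[
p(k)\,s^{k}\;\le\;\prod_{j\ge 1}\frac{1}{1-s^{j}},
\]
and since $\sum_{j\ge 1}\log\frac{1}{1-s^{j}}=O\bigl((1-s)^{-1}\bigr)$ as $s\uparrow1$, the choice $s=e^{-c/\sqrt{k}}$ yields $\log p(k)=O(\sqrt{k})$ (the classical Hardy--Ramanujan formula gives the exact constant but is not needed here). Therefore
\[
|\M_t|\;\le\;t\,\max_{k<t}p(k)\;=\;\exp\bigl(O(\sqrt{t})\bigr)\;=\;e^{o(t)},
\]
as required.

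There is no real obstacle: the argument is essentially a dimension count, and the only nontrivial ingredient, the sub-exponential growth of $p(k)$, is a standard fact whose elementary generating-function derivation has been sketched above. The statement is exactly what is needed in the sequel, since the error terms in the asymptotic expansion for $\P(M=m,K=\kappa-n\,|\,W<\eps)$ are of exponential type in $\langle m\rangle$, so a sub-exponential count of configurations suffices to carry out a union bound over $m\in\M_t$.
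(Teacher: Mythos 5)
Your proof is correct, and it follows a genuinely different route from the paper's. The paper argues directly: it splits the index set $\J$ into a block of small indices (up to some threshold $r_t$ chosen with $\log t = o(r_t)$ and $r_t = o(t/\log t)$) and a block of large indices (between $r_t$ and $t+\mu$), then bounds the count of admissible $m$ on each block separately — using the volume estimate $|S_{n,t}|\le (t+n)^n/n!$ for the first block and the crude geometric-sum estimate $|S_{n,t}|\le n^{t+1}$ for the second — and multiplies. Your approach instead observes that, because the weights $j-\mu$ are distinct positive integers, the map sending $m$ to the multiset of parts gives an injection of $\M_t$ into the set of integer partitions of numbers $k<t$, whence $|\M_t|\le \sum_{k<t}p(k)$; you then quote the elementary generating-function bound $\log p(k)=O(\sqrt{k})$ (via $p(k)s^k\le\prod_{j\ge1}(1-s^j)^{-1}$ with $s=e^{-c/\sqrt{k}}$). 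This is cleaner, avoids the somewhat delicate choice of $r_t$, and yields the quantitatively stronger conclusion $|\M_t|=e^{O(\sqrt{t})}$, which is in fact of the correct order when $\J$ is infinite. The only thing worth making explicit, though your phrasing already implies it, is why the map into partitions is injective: the parts $\{j-\mu:j\in\J\}$ are distinct, so the multiplicities $m_j$ can be read off from the multiset.
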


\begin{proof} For each $n\in \N$ and $t>0$,  denote
\begin{align*}
S_{n,t}=\{m\in (\N\cup\{0\})^{n}:\sum_{i=1}^n m_i<t\}.
\end{align*}
Let $Q_n = [0,1]^n$ be the unit $n$-dimensional cube based in the origin. Then
\begin{align}
\label{sim1}
|S_{n,t}|=\text{vol}\Big\{\bigcup_{m\in S_{n,t}}(m+Q)\Big\}
\le \text{vol}\Big\{x\in [0,\infty)^n: \sum_{i=1}^n x_i<t+n\Big\}=\frac{(t+n)^n}{n!}.
\end{align}
On the other hand,
\begin{align}
\label{sim2}
|S_{n,t}|=\sum_{0\le j<t}\Big|\Big\{m\in (\N\cup\{0\})^{n}:\sum_{i=1}^n m_i=j\Big\}\Big|\le \sum_{0\le j<t} n^j\le \int_0^{t}n^xdx\le n^{t+1}.
\end{align}
The former estimate is useful for large $t$, the latter for large $n$.
\medskip

Let $r\colon(0,\infty)\to \N$ be such that
$r_t=o(t/\log t)$ and $\log t=o(r_t)$ as $t\to \infty$. With the convention
$m_i=0$ if $i\not\in\J$ we get, for large $t$,
\begin{align*}
\M_t\subset \big\{m\in (\N\cup\{0\})^{\N} \colon (\lambda-\mu)\sum_{^{i=1}}^{_{r_t}}m_i<t,
(r_t-\mu)\sum_{^{i=r_t+1}}^{_{\lfloor t+\mu \rfloor}}m_i<t, m_i=0\text{ for all }i>t+\mu\big\}.
\end{align*}
Using~\eqref{sim1} and~\eqref{sim2} we get
\begin{align*}
|\M_t|
\le |S_{r_t,\frac{t}{\lambda-\mu}}|\,|S_{\lfloor t+\mu \rfloor-r_t, \frac{t}{r_t-\mu}}|
\le |S_{r_t,t}|\,|S_{\lfloor t+\mu \rfloor-r_t, \frac{t}{r_t-\mu}}|
\le \frac{(t+r_t)^{r_t}}{r_t!}(t+\mu-r_t)^{\frac{t}{r_t-\mu}+1}.
\end{align*}
This leads to
\begin{align*}
|\M_t|
&=\exp\Big\{r_t\log(t+r_t)-r_t\log r_t+r_t+\sfrac{t+\mu}{r_t-\mu}\log(t-r_t)+o(t)\Big\}\\
&=\exp\Big\{r_t\log t-r_t\log r_t+r_t+\sfrac{t+\mu}{r_t-\mu}\log t+o(t)\Big\}=e^{o(t)}.
\end{align*}
\end{proof}

\begin{lemma}
\label{taylor}
For $q\in[1,2]$ and $\e>0$ let
$h(q) :=
q(b\circ \phi)(u_q)+yu_q.$
Then
$$h(q)\le h(1)+(b\circ \phi)(u_1)(q-1).$$
\end{lemma}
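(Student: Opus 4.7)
My plan is to recognize $h$ as the infimum of a one-parameter family of affine functions of $q$, which makes concavity automatic, and then read off the desired bound as the tangent line inequality at $q=1$.

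Write $g := b\circ\phi$ for brevity. The paper has already established that $g$ is convex with $g'$ strictly increasing from $-\infty$ to $0$ on $(0,\infty)$, and that $u_q$ is the unique solution of $g'(u_q) = -y/q$. Hence $u_q$ is the unique minimizer of the strictly convex function $u\mapsto g(u) + (y/q)u$ on $(0,\infty)$, and therefore
\begin{equation*}
h(q) \;=\; q\bigl[g(u_q) + (y/q)u_q\bigr] \;=\; q\,\inf_{u>0}\bigl[g(u) + (y/q)u\bigr] \;=\; \inf_{u>0}\bigl[q\,g(u) + y\,u\bigr].
\end{equation*}
For each fixed $u>0$, the map $q\mapsto q\,g(u) + y\,u$ is affine in $q$, so $h$ is an infimum of affine functions and hence concave on $[1,2]$.

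The infimum at $q=1$ is attained at $u=u_1$, so the affine function $A(q) := q\,g(u_1) + y\,u_1$ satisfies $A(1)=h(1)$ and $A(q)\ge h(q)$ for every $q\in[1,2]$. Rewriting $A(q) = h(1) + (q-1)g(u_1)$ gives exactly
\begin{equation*}
h(q) \;\le\; h(1) + (b\circ\phi)(u_1)(q-1),
\end{equation*}
as claimed.

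I do not expect any real obstacle here: everything needed (convexity of $b\circ\phi$, existence and uniqueness of the minimizer $u_q$) is already in place from Section~4 via the cited Lemma~17 of \cite{FW09}. As a sanity check one could instead differentiate directly: the envelope identity gives $h'(q) = g(u_q)$ (the $u_q'(q)$ terms cancel because $qg'(u_q) + y = 0$), and then $h''(q) = g'(u_q)u_q'(q) = -y^2/(q^3 g''(u_q)) < 0$ by $g''>0$, so the tangent line inequality at $q=1$ yields the same conclusion.
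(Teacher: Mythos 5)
Your proof is correct, and it takes a genuinely different (and arguably cleaner) route than the paper. The paper differentiates $h$ twice: it computes $\partial h/\partial q=(b\circ\phi)(u_q)$ and $\partial^2 h/\partial q^2=(b\circ\phi)'(u_q)\,\partial u_q/\partial q\le 0$, using that $(b\circ\phi)'<0$ and that $u_q$ is nondecreasing in $q$ (which follows from $(b\circ\phi)'(u_q)=-y/q$ and the monotonicity of $(b\circ\phi)'$), and then invokes the Taylor expansion at $q=1$. You instead observe that, because $u_q$ minimizes $u\mapsto (b\circ\phi)(u)+(y/q)u$, one has $h(q)=\inf_{u>0}\bigl[q\,(b\circ\phi)(u)+y\,u\bigr]$, an infimum of affine functions of $q$; concavity of $h$ is then automatic, and the desired bound is just the statement that the minimizing affine function at $q=1$, namely $A(q)=q\,(b\circ\phi)(u_1)+y\,u_1=h(1)+(q-1)(b\circ\phi)(u_1)$, dominates $h$ everywhere. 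Your approach buys you a proof that requires no implicit differentiation of $u_q$ and no second-derivative computation, only the convexity of $b\circ\phi$ and the first-order characterization of $u_q$; the paper's computation has the minor side benefit of exhibiting $h'(1)=(b\circ\phi)(u_1)$ explicitly, which your tangent-line reading also recovers. Both arguments lean on the same facts from Lemma~17 of \cite{FW09}, so nothing new is needed. Your ``sanity check'' at the end is essentially the paper's proof, modulo the small slip in the paper where it writes ``Since $b\circ\phi$ is negative'' when the sign argument actually uses $(b\circ\phi)'<0$.
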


\begin{proof} Since $b\circ \phi$ is analytic we get, using~\eqref{def_u},
\begin{align*}
\frac{\partial h}{\partial q}(q)=(b\circ \phi)(u_q)\qquad\text{ implying }\qquad
\frac{\partial h}{\partial q}(1)=(b\circ \phi)(u_1)
\end{align*}
and
\begin{align*}
\frac{\partial^2 h}{\partial q^2}(q)=(b\circ \phi)'(u_q)\frac{\partial u_q}{\partial q}.
\end{align*}
Since $(b\circ \phi)'$ is analytic and increasing
from $-\infty$ to $0$ on $(0,\infty)$, equation~\eqref{def_u} implies that
$u_q$ is increasing in $q$ and so the derivative ${\partial u_q}/{\partial q}$ is nonnegative.
Since $b\circ \phi$ is negative we have ${\partial^2 h}/{\partial q^2}(q)\le 0$ for all $q$ and $\e$.
Now the statement of the lemma follows from the Taylor expansion of $h$ at the point $q=1$.
\end{proof}
\medskip

Denote
\begin{equation}\label{Ndef}
N(\e):=\mu^{\kappa(\eps)-n(\eps)-1}
\end{equation}
 and let
\begin{align*}
\Phi_j(\e):=
p_jp_{\mu}^{-\frac{j-1}{\mu-1}}N\exp\big\{(j-\mu)\mu^n b(\phi(u_{1}))\big\}, \qquad \mbox{ for }  j\in \J.
\end{align*}
Note that all $\Phi_j$, $j\neq \lambda$, are negligible with respect to $\Phi_{\lambda}$ to the extent that,
for any~$c\in \R$,
\begin{align*}
\sum_{j\in \J\backslash \{\lambda\}}\Phi_j e^{cj}=o(\Phi_{\lambda}).
\end{align*}
\smallskip

\begin{proof}[Proof of Theorem~\ref{t_prop}]
Recall that for each $j\in \J$, $M_j$ is the number of individuals in generation $K-1$ having precisely~$j$
children. Write $M:=(M_j)_{j\in \J}$.  Then
\begin{equation}
\label{Zmudiff}
Z_{K}=\mu(\mu^{K-1}-|M|)+\sum_{j\in \J}j M_j=\mu^{K}+\langle M\rangle.
\end{equation}
Observe that, by Lemma~\ref{l_test}\,(b), we have
$(\lambda-\mu)\Phi_{\lambda}=C\,\mu^{\kappa-n}\e^{\alpha\mu^{\gamma-\kappa+n}},$
where
$$C=\big(\sfrac{\lambda}{\mu}-1\big)p_{\lambda}p_{\mu}^{-\frac{\lambda-1}{\mu-1}}.$$
Let $\delta>0$ be small enough. By Theorems~\ref{t_main} and~\ref{t_main2}
it suffices to show that
\begin{equation}
\label{boundsMph}
\P\big(\langle M\rangle\notin \big((\lambda-\mu-\delta)\Phi_{\lambda}(\eps_j),(\lambda-\mu+\delta)\Phi_{\lambda}(\eps_j)\big),
K=\kappa(\eps_j)-n(\eps_j)\,  \big|\,  W<\e_j\big)\to 0,
\end{equation}
for $d=0$ in the case $\omega(\e_j)\to \infty$  and for $d=1$ in the case $\omega(\e_j)\to 0$.
By Lemma~\ref{l_test},
\begin{align}
\label{ph1}
&\Phi_{\lambda}(\e_j)\asymp \frac{\e_j^{\alpha(\mu^{-\{\gamma\}}-1)}}{\log(1/\e_j)}= \omega(\e_j) \to\infty  &
\qquad \text{ for $d=0$ in the case $\omega(\e_j){\to} \infty$}, \\
&\Phi_{\lambda}(\e_j)\asymp \frac{\e_j^{\alpha(\mu^{-\{\gamma\}-1}-1)}}{\log(1/\e_j)} \to\infty &
\qquad \text{ for $d=1$ in the case $\omega(\e_j){\to} 0$}.
\label{ph2}
\end{align}
Hence in both cases $\Phi_{\lambda}(\e_j)\to\infty$.
\medskip

We prove (\ref{boundsMph}) by showing that
\begin{align}
\label{delta1}
\P\big(\langle M\rangle\le(\lambda-\mu-\delta)\Phi_{\lambda}(\e_j), K=\kappa(\e_j)-n(\e_j)\ \big|\ W<\e_j\big)\to 0,\\
\label{delta2}
\P\big((\lambda-\mu+\delta)\Phi_{\lambda}(\e_j)\le \langle M\rangle <3e^{\lambda}\Phi_{\lambda}(\e_j), K=\kappa(\e_j)-n(\e_j)\ \big|\ W<\e_j\big)\to 0,\\
\label{m1}
\P\big( 3e^{\lambda}\Phi_{\lambda}(\e_j)\le \langle M\rangle<N(\e_j)/2, K=\kappa(\e_j)-n(\e_j)\ \big|\ W<\e_j\big)\to 0,\\
\label{m2}
\P\big( \langle M\rangle\ge N(\e_j)/2, K=\kappa(\e_j)-n(\e_j)\ \big|\ W<\e_j\big)\to 0.
\end{align}

The rest of the proof is split into five steps. In Step~1, we find an asymptotic formula for the conditional
probabilities $\P(M=m, K=\kappa-n\, |\, W<\e)$ for $m\in \M$.
Then we prove~\eqref{delta1}, \eqref{delta2}, \eqref{m1}, and~\eqref{m2} in the next four steps.
\bigskip

{\it Step 1. An asymptotic formula.}

Let $m\in \M$ be such that $m_j\neq 0$ for some $j\in \J$ and $|m|\le N$.
In particular, this means that only finitely many of the $m_j$  are non-zero. Denote
$$q(m)
:=1+\langle m\rangle\mu^{n-\kappa}
\qquad\text{ and }\qquad \bar q(m)
:=2\wedge q(m),$$
where $\wedge$ stands for the minimum.
\medskip

For each $j\in \J$, denote by $\widetilde M_j(\e)$ the number of individuals in generation $\kappa-n-1$ having precisely~$j$ children.
Let $\widetilde M(\e):=(\widetilde M_j(\e))_{j\in \J}$. Again we drop the dependence on $\eps$ from this notation whenever convenient.
Observe that  $K=\kappa-n$ and $M=m$ imply $Z_{\kappa-n}=q(m)\mu^{\kappa-n}$ and so we have
\begin{align*}
\P(M=m,\, K=\kappa-n, W<\e)
=\P\Big(Z_{\kappa-n-1}=N, \widetilde M=m, \sum_{^{i=1}}^{_{q(m)\mu^{\kappa-n}}}W_i<\e a^{\kappa-n}\Big) & \notag\\
=\P\big(Z_{\kappa-n-1}=N\big)\,\P\big(\widetilde M=m \, \big|\,  Z_{\kappa-n-1}=N\big) \,
\P\Big(\sum_{^{i=1}}^{_{\bar q(m)\mu^{\kappa-n}}}W_i<\e a^{\kappa-n}\Big)&.
\end{align*}
This yields
\begin{align}
\P&\big(M=m, K=\kappa-n \, |\,  W <\e\big)\notag\\
&=\P\big(Z_{\kappa-n-1}=N\big)\P\big( \widetilde M=m\, \big| \, Z_{\kappa-n-1}=N\big) \,
\P\Big(\sum_{^{i=1}}^{_{\bar q(m)\mu^{\kappa-n}}}W_i<\e a^{\kappa-n}\Big)\,
\P(W<\e)^{-1}.
\label{terms}
\end{align}

For the \emph{first} term in (\ref{terms}), we have
\begin{align}
\label{term1}
\P\big(Z_{\kappa-n-1}=N\big)=p_{\mu}^{1+\mu+\cdots+\mu^{\kappa-n-2}}
=p_{\mu}^{\frac{N-1}{\mu-1}}.
\end{align}

We can compute the \emph{second} term in (\ref{terms}) as
\begin{align*}
P&\big( \widetilde M=m\, \big|\, Z_{\kappa-n-1}=N\big)
=p_{\mu}^{N}\frac{N!}{(N-|m|)!}\prod_{j\in \J}\frac{1}{m_j!}\Big(\frac{p_j}{p_{\mu}}\Big)^{m_j}.
\end{align*}
Observe that
\begin{align*}
\frac{\sqrt{N}}{\sqrt{N-|m|}}\prod_{\heap{j\in \J}{m_j\neq 0}}\frac{1}{\sqrt{m_j}}\le
\left\{\begin{array}{ll}
\sqrt{2} & \text{ if } |m|<N/2,\\
\sqrt{N} & \text{ if } |m|<N.
\end{array}\right.
\end{align*}
Combining this with Stirling's formula we obtain, uniformly for $|m|<N/2$,
\begin{align}
P&\big( \widetilde M=m\, \big|\, Z_{\kappa-n-1}=N\big)\notag\\
&=O(1)\, p_{\mu}^N\exp\Big\{N\log N-(N-|m|)\log (N-|m|)-\sum_{j\in \J}m_j\log m_j
+\sum_{j\in \J}m_j\log \frac{p_j}{p_{\mu}}\Big\}\notag\\
&=O(1)\, p_{\mu}^N\exp\Big\{|m|\log N-N\big(1-\sfrac{|m|}{N}\big)\log \big(1-\sfrac{|m|}{N}\big)
-\sum_{j\in \J}m_j\log m_j+\sum_{j\in \J}m_j\log \frac{p_j}{p_{\mu}}\Big\}\notag\\
&=O(1)\, p_{\mu}^N\exp\Big\{|m|\log N+|m|
-\sum_{j\in \J}m_j\log m_j+\sum_{j\in \J}m_j\log \frac{p_j}{p_{\mu}}\Big\},
\label{term21}
\end{align}
since $(1-x)\log (1-x)\ge -x$ for all $0\le x\le 1$ (we use the convention $0\log 0=0$).
%
Similarly,
\begin{align}
P&\big( \widetilde M=m\, \big|\, Z_{\kappa-n-1}=N\big)\notag\\
&=O(1) \, \sqrt{N}p_{\mu}^N\exp\Big\{|m|\log N+|m|
-\sum_{j\in \J}m_j\log m_j+\sum_{j\in \J}m_j\log \frac{p_j}{p_{\mu}}\Big\}
\label{term22}
\end{align}
uniformly for all $|m|\le N$.
\medskip

To compute the \emph{third} term in (\ref{terms}) we use Proposition~\ref{p_main} and get
\begin{align*}
\P\Big(\sum_{^{i=1}}^{_{\bar q(m)\mu^{\kappa-n}}}W_i<\e a^{\kappa-n}\Big)
=O(1)\, p_{\mu}^{-\frac{\bar q(m)\mu N}{\mu-1}}\mu^{-\frac{\kappa}{2}}
\exp\big\{\mu^\kappa h(\bar q(m))-\mu^\kappa \bar q(m) \psi_n(\phi(u_{\bar q(m)}))\big\}.
\end{align*}
Applying Lemma~\ref{taylor} yields
\begin{align}
\label{term3}
\P&\Big(\sum_{^{i=1}}^{_{\bar q(m)\mu^{\kappa-n}}}W_i<\e a^{\kappa-n}\Big)\notag\\
&=O(1)\, p_{\mu}^{-\frac{\bar q(m)\mu N}{\mu-1}}\mu^{-\frac{\kappa}{2}}\exp\big\{\mu^\kappa h(1)+
\langle m\rangle\mu^n(b\circ \phi)(u_1)
-\mu^\kappa \bar q(m) \psi_n(\phi(u_{\bar q(m)}))\big\}.
\end{align}
The \emph{fourth} term in (\ref{terms}) is given by Lemma~\ref{eps},
\begin{align}
\label{term4}
\P(W<\e)^{-1}=O(1)p_{\mu}^{\frac{1}{\mu-1}}\mu^{\frac{\kappa}{2}}\exp\big\{-\mu^\kappa h(1)\big\}.
\end{align}
Combining~\eqref{terms},~\eqref{term1},~\eqref{term21},~\eqref{term3}, and~\eqref{term4} we obtain
\begin{align}
\P\big(M&=m, K=\kappa-n \, |\,  W <\e\big)\notag\\
&=O(1)\, p_{\mu}^{-\frac{\langle m\rangle}{\mu-1}}
\exp\Big\{|m|\log N+|m|
-\sum_{j\in \J}m_j\log m_j+\sum_{j\in \J}m_j\log \frac{p_j}{p_{\mu}}\notag\\
&\phantom{aaaaaaaaaaaaaaaaaa}+\langle m\rangle\mu^n(b\circ \phi)(u_1)-\mu^\kappa \bar q(m) \psi_n(\phi(u_{\bar q(m)}))\Big\}\notag\\
&=O(1)\, \exp\Big\{|m|-\sum_{j\in \J}m_j\log m_j+\sum_{j\in \J}m_j
\log \Big(p_jp_{\mu}^{-\frac{j-1}{\mu-1}}N\exp\big\{(j-\mu)\mu^n b(\phi(u_{1}))\big\}\Big)\notag\\
&\phantom{aaaaaaaaaaaaa}-\mu^\kappa \bar q(m)\psi_n(\phi(u_{\bar q(m)}))\Big\}\notag\\
&=O(1)\, \exp\Big\{|m|-\sum_{j\in \J}m_j\log m_j+\sum_{j\in \J}m_j\log \Phi_j-\mu^\kappa \bar q(m)\psi_n(\phi(u_{\bar q(m)}))\Big\},
\label{uu1}
\end{align}
uniformly in $m$ such that $|m|<N/2$.
\medskip

If the condition $|m|<N/2$ is not satisfied we need to replace~\eqref{term21} by the rougher estimate~\eqref{term22} in the previous
computation. This gives
\begin{align}
\P\big(M&=m, K=\kappa-n \, |\,  W <\e\big)\notag\\
&=O(1)\,\sqrt{N}\exp\Big\{|m|-\sum_{j\in \J}m_j\log m_j+\sum_{j\in \J}m_j\log \Phi_j
-\mu^\kappa \bar q(m)\psi_n(\phi(u_{\bar q(m)}))\Big\}\notag\\
&=O(1)\,\sqrt{N}\exp\Big\{|m|-\sum_{j\in \J}m_j\log m_j+\sum_{j\in \J}m_j\log \Phi_j\Big\}
\label{uu12}
\end{align}
uniformly for all $|m|\le N$, since the last term in the second line of~\eqref{uu12} is positive.
\bigskip

\pagebreak[3]

{\it Step 2. Proof of~\eqref{delta1}.}

Consider all $m$ such that $\langle m\rangle \le (\lambda-\mu-\delta)\Phi_{\lambda}$.
Observe that in this case, for~$\eps>0$ small enough, $q(m)\le 2$ and $|m|\le \langle m\rangle< N/2$.
By Lemma~\ref{l_psi} we have
\begin{equation}\begin{aligned}
\mu^\kappa q(m)\psi_n( & \phi(u_{q(m)}))\\
&=p_{\lambda}p_{\mu}^{-\frac{\lambda-1}{\mu-1}}N
\exp\big\{(\lambda-\mu)\mu^n b(\phi(u_{q(m)}))\big\}(1+o(1))\\
&=p_{\lambda}p_{\mu}^{-\frac{\lambda-1}{\mu-1}}N
\exp\big\{(\lambda-\mu)\mu^n b(\phi(u_{1}))+\langle m\rangle\mu^{2n-\kappa} O(1)\big\}(1+o(1))\\
&=\Phi_{\lambda}+o(\Phi_{\lambda}).
\label{uu2}
\end{aligned}\end{equation}
since $\mu^{\kappa-2n}\asymp \e^{-\alpha}(\log(1/\e))^{-2}$ and so $\Phi_{\lambda}\mu^{2n-\kappa}=o(1)$ according to~\eqref{ph1}
and~\eqref{ph2}.

Combining~\eqref{uu1} and~\eqref{uu2} we get, uniformly in $m$,
\begin{equation}\begin{aligned}
\label{ttt}
\P\big(M=m, & \, K=\kappa-n \, |\,  W <\e\big)\\
&=O(1)\exp\Big\{|m|-\sum_{j\in \J}m_j\log m_j+\sum_{j\in \J}m_j\log \Phi_j-\Phi_{\lambda}+o(\Phi_{\lambda})\Big\}.
\end{aligned}\end{equation}
It is easy to see that the function in the brackets achieves its maximum at $m$ given by $m_j=\Phi_j$. However, this
$m$ does not satisfy the condition $\langle m\rangle \le (\lambda-\mu-\delta)\Phi_{\lambda}$, and so the
maximum over the admissible domain is achieved  on the boundary $\langle m\rangle =(\lambda-\mu-\delta)\Phi_{\lambda}$.
Using Lagrange multipliers,
we obtain that the maximum is attained for $m_j=\Phi_j e^{c(j-\mu)}$ for some $c<0$ (depending on $\eps$) such that
\begin{align*}
\sum_{j\in \J}(j-\mu)\Phi_j e^{c(j-\mu)}=(\lambda-\mu-\delta)\Phi_{\lambda}.
\end{align*}
Since all $\Phi_j$ with $j\neq \lambda$ are negligible with respect to $\Phi_{\lambda}$
we have that $(\lambda-\mu)e^{c(\lambda-\mu)}\sim \lambda-\mu-\delta$ and so $c$ is separated from zero.
%
Substituting the maximiser into~\eqref{ttt} and neglecting all $\Phi_j$ with $j\neq \lambda$ we get
\begin{equation}\begin{aligned}
\label{tttt}
\P\big(M=m, & \, K=\kappa-n \, |\,  W <\e\big)\\
&= O(1) \exp\Big\{-\Phi_{\lambda}\big(1-e^{c(\lambda-\mu)}+c(\lambda-\mu)e^{c(\lambda-\mu)}\big)+o(\Phi_{\lambda})\Big\}.
\end{aligned}\end{equation}
Observe that the function $\rho(x)=1-e^{x}+xe^{x}$ is decreasing from $1$ to $0$ on $(-\infty,0]$. Since $c$ is negative and separated from zero we obtain
\begin{align*}
\P\big(M=m, K=\kappa-n \, |\,  W <\e\big)
= O(1) \exp\big\{-\theta\Phi_{\lambda}+o(\Phi_{\lambda})\big\}.
\end{align*}
with some $\theta>0$, uniformly in $m$.
Finally, by Lemma~\ref{card},
\begin{align*}
\P\big(\langle M\rangle \le (\lambda-\mu-\delta)\Phi_{\lambda}, K=\kappa-n \, |\,  W <\e\big)
&= O(1) \exp\big\{-\theta\Phi_{\lambda}+o(\Phi_{\lambda})\big\}|\M_{(\lambda-\mu-\delta)\Phi_{\lambda}}|\\
&=O(1) \exp\big\{-\theta\Phi_{\lambda}+o(\Phi_{\lambda})\big\}=o(1).
\end{align*}

{\it Step 3. Proof of~\eqref{delta2}.}

Now consider all $m$ such that  $(\lambda-\mu+\delta)\Phi_{\lambda}\le \langle m\rangle < 3e^{\lambda}\Phi_{\lambda}$.
The estimates $q(m)\le 2$ and $|m|<N/2$ as well as the
asymptotics~\eqref{uu2} and~\eqref{ttt} remain true in this case and, similarly to the previous step, the maximum of the
function in the brackets in~\eqref{ttt} over the region $\langle m\rangle\ge (\lambda-\mu+\delta)\Phi_{\lambda}$
is attained on the boundary
$\langle m\rangle=(\lambda-\mu+\delta)\Phi_{\lambda}$
at $m$ given by $m_j=\Phi_j e^{c(j-\mu)}$ for some $c>0$, depending on $\eps$ but bounded away from zero.
\pagebreak[2]

We use~\eqref{tttt}, which is true in this case as well, and the fact that
$\rho$ is increasing from $0$ to $\infty$ on $[0,\infty)$ to obtain
$\P(M=m, K=\kappa-n \, |\,  W <\e)
= O(1) \exp\{-\theta\Phi_{\lambda}+o(\Phi_{\lambda})\}$
with some $\theta>0$, uniformly in $m$.
Finally, by Lemma~\ref{card},
\begin{align*}
\P\big((\lambda-\mu+\delta)\Phi_{\lambda}\le \langle M\rangle < 3e^{\lambda} & \Phi_{\lambda} , K=\kappa-n \, |\,  W <\e\big)\\
&= O(1) \exp\big\{-\theta\Phi_{\lambda}+o(\Phi_{\lambda})\big\}|\M_{3e^{\lambda}\Phi_{\lambda}}|\\
&=O(1) \exp\big\{-\theta\Phi_{\lambda}+o(\Phi_{\lambda})\big\}=o(1).
\end{align*}

{\it Step 4. Proof of~\eqref{m1}.}

Here we consider all $m$ satisfying $3e^{\lambda}\Phi_{\lambda}\le \langle m\rangle< N/2$. Then
again $|m|<N/2$ and $q(m)\le 2$.
Since the last term in~\eqref{uu1} is positive we have
\begin{align}
\P&\big(M=m, K=\kappa-n \, |\,  W <\e\big)
=O(1)\,\exp\Big\{|m|-\sum_{j\in \J}m_j\log m_j+\sum_{j\in \J}m_j\log \Phi_j\Big\}\notag\\
&=O(1)\,\prod_{j\in \J}e^{-jm_j}\exp\Big\{\sum_{j\in \J}(j+1)m_j-\sum_{j\in \J}m_j\log m_j+\sum_{j\in \J}m_j\log \Phi_j\Big\}.
\label{ppp}
\end{align}
The maximum of the function in the brackets is achieved for $m_j=\Phi_je^j$, which does not satisfy the condition
$\langle m\rangle\ge 3e^{\lambda}\Phi_{\lambda}$.
It is easy to see that the maximum over the region $\langle m\rangle\ge 3e^{\lambda}\Phi_{\lambda}$ is achieved on the boundary
$\langle m\rangle= 3e^{\lambda}\Phi_{\lambda}$ for $m$ given by $m_j=\Phi_je^{j+c(j-\mu)}$, and $c>0$ is such that
\begin{align}
\label{sim}
\sum_{j\in \J}\Phi_je^{j+c(j-\mu)}=3e^{\lambda}\Phi_{\lambda}.
\end{align}
Substituting the maximiser into~\eqref{ppp} we obtain
\begin{align*}
\P\big(M=m, K=\kappa-n \, |\,  W <\e\big)
&=O(1)\,\prod_{j\in \J}e^{-jm_j}\exp\Big\{-\sum_{j\in \J}(c(j-\mu)-1)\Phi_je^{j+c(j-\mu)}\Big\}.
\end{align*}
Since $e^{\lambda+c(\lambda-\mu)}\sim 3e^{\lambda}$ according to~\eqref{sim}
we have $c(\lambda-\mu)-1\sim \log 3-1>0$ and so
\begin{align*}
\P\big(M=m, K=\kappa-n \, |\,  W <\e\big)
&=O(1)\,\exp\big\{-\theta\Phi_{\lambda}+o(\Phi_{\lambda})\big\}\prod_{j\in \J}e^{-jm_j}
\end{align*}
for some $\theta>0$, uniformly in $m$.
Hence
\begin{align*}
\P&\big(3e^{\lambda}\Phi_{\lambda}\le \langle M\rangle <N/2, K=\kappa-n \, |\,  W <\e\big)\notag\\
&=O(1)\,\exp\big\{-\theta\Phi_{\lambda}+o(\Phi_{\lambda})\big\}\sum_{m\in \M_{N/2}}\prod_{j\in \J}e^{-jm_j},
\end{align*}
and the right hand side is $o(1)$ as
\begin{align}
\label{finn}
\sum_{m\in \M_{N/2}}\prod_{j\in \J}e^{-jm_j}\le \prod_{j\in \J}\sum_{m_j=0}^{\infty}e^{-jm_j}=\prod_{j\in \J}\frac{1}{1-e^{-j}}<\infty.
\end{align}
\bigskip

\pagebreak[3]
{\it Step 5. Proof of~\eqref{m2}.}

Finally consider all $m$ such that $ \langle m\rangle\ge N/2$. Using~\eqref{uu12} we obtain
\begin{align}
\P&\big(M=m, K=\kappa-n \, |\,  W <\e\big)\notag\\
&=O(1)\sqrt{N}\prod_{j\in \J}e^{-jm_j}\exp\Big\{\sum_{j\in \J}(j+1)m_j-\sum_{j\in \J}m_j\log m_j+\sum_{j\in \J}m_j\log \Phi_j\Big\}.
\label{ppp3}
\end{align}
Similarly to the previous step,
the maximum of the function in the brackets  over the region $\langle m\rangle\ge N/2$ is achieved on the boundary
$\langle m\rangle= N/2$ at $m$ given by $m_j=\Phi_je^{j+c(j-\mu)}$, where $c>0$ is such that
\begin{align}
\label{sim3}
\sum_{j\in \J}\Phi_je^{j+c(j-\mu)}=N/2.
\end{align}
Substituting the maximiser into~\eqref{ppp3} we obtain
\begin{align*}
\P\big(M=m,& \,  K=\kappa-n \, |\,  W <\e\big)\\
&=O(1)\, \sqrt{N}\prod_{j\in \J}e^{-jm_j}\exp\Big\{-\sum_{j\in \J}(c(j-\mu)-1)\Phi_je^{j+c(j-\mu)}\Big\}.
\end{align*}
Now~\eqref{sim3} implies that $c\to\infty$ and so $c(j-\mu)-1\ge c(\lambda-\mu)-1\ge 1$ for all $j$ eventually. Hence
\begin{align*}
\P\big(M=m, K=\kappa-n \, |\,  W <\e\big)
&=O(1)\,\sqrt{N}\exp\Big\{-\sum_{j\in \J}\Phi_je^{j+c(j-\mu)}\Big\}\prod_{j\in \J}e^{-jm_j}\\
&=O(1)\, \sqrt{N}e^{-\frac N 2}\prod_{j\in \J}e^{-jm_j}=o(1)\prod_{j\in \J}e^{-jm_j}.
\end{align*}
From this we can conclude that
\begin{align*}
\P\big(\langle M\rangle \ge N/2, K=\kappa-n \, |\,  W <\e\big)
&=o(1)\sum_{m\in \M}\prod_{j\in \J}e^{-jm_j}=o(1),
\end{align*}
using again that the sum is finite, similarly to~\eqref{finn}.
\end{proof}

\bigskip

{\bf Acknowledgments:} The first author is supported by EPSRC grants EP/G055068/1 and EP/IO3372X/1.

\bigskip

\vspace{0.3cm}

{\footnotesize
Nathana\"el Berestycki: Statistical Laboratory, DPMMS, University of Cambridge. Wilberforce Road, Cambridge CB3 0WB. United Kingdom.\\

Nina Gantert: Technische Universit\"at M\"unchen,
Fakult\"at f\"ur Mathematik,
Boltzmannstra\ss e~3,
85748~Garching bei M\"unchen.
Germany.\\

Peter M\"orters: Department of Mathematical Sciences,
University of Bath.
Claverton Down,
Bath BA2 7AY.
United Kingdom.\\

Nadia Sidorova: Department of Mathematics, University College London.
Gower Street, London WC1E 6BT. United Kingdom.}

\end{document}